\theoremstyle{plain}
\newtheorem{theorem}{Theorem}[section]
\newtheorem{lemma}{Lemma}[section]
\theoremstyle{definition}
\newtheorem{definition}{Definition}[section]
\theoremstyle{remark}
\newtheorem{remark}{Remark}[section]
\numberwithin{equation}{section}
\title[Energy dissipation admissibility condition]
{Energy dissipation admissibility condition for conservation law systems
admitting singular solutions}
\author{Marko Nedeljkov and Sanja Ru\v{z}i\v{c}i\'{c}}
\address{Department of Mathematics and Informatics, University of Novi Sad,
Trg D.\ Obradovi\'{c}a 4, 21000 Novi Sad, Serbia}
\email{marko@dmi.uns.ac.rs, sanja.ruzicic@dmi.uns.ac.rs}
\keywords{admissibility of solutions, conservation law systems,
energy dissipation, shadow waves}
\begin{document}

\begin{abstract}
The main goal of the paper is to define and use a condition sufficient to
choose a unique solution to conservation law systems with a singular measure
in initial data.  Different approximations can lead 
to solutions with different distributional limits. 
The new notion called backward energy condition is then to single out a proper 
approximation of the distributional initial data. The definition is based
on the maximal energy dissipation defined in \cite{CD_1973}.
Suppose that a conservation law system admits a supplementary law
in space--time divergent form where the time component is a
(strictly or not) convex function. It could be an energy density or a mathematical 
entropy in gas dynamic models, for example. One of the admissibility
conditions is that a proper weak solution should maximally 
dissipate the energy or the mathematical entropy.
We show that it is consistent 
with other admissibility conditions in the case of Riemann problems for 
systems of isentropic gas dynamics with non-positive pressure
in the first part of the paper.
Singular solutions to these systems are described by 
shadow waves, nets of piecewise constant approximations with respect 
to the time variable. 
In the second part, we define and apply the backward energy condition 
for those systems when the initial data contains a delta measure
approximated by piecewise constant functions. 
\end{abstract}

\maketitle


\section{Introduction}

A conservation law system 
\[\partial_{t}U+\partial_{x}(F(U))=0,\; U:\Omega \to \mathbb{R}^{n}
\]
is called physical if there exists a pair of functions
$(\eta, Q)$, $\eta$ being convex, satisfying the additional
conservation law
\[\partial_{t}\eta(U)+\partial_{x}(Q(U))=0,
\]
for all classical smooth solutions $U$.
The function $\eta$ may be the energy density or 
the Lax (also called mathematical) entropy, for example.
In this paper, we are dealing with the isentropic systems of
gas dynamics and the function $\eta$ denotes the physical energy density.

The entropy admissibility condition for conservation laws 
based on some well-known physical systems
is introduced in \cite{CD_1973}:
The admissible weak solution to a physical system 
of conservation laws is the one that produces 
a maximal decrease of the quantity $\int \eta \, dt$. 
It is also called the principle of maximal energy dissipation
in the literature. We will call it the energy admissibility condition.

One can look in \cite{FGSW} for analysis of energy in  
compressible and incompressible isentropic Euler equations.
The above admissibility condition is not as usefull as
the usual ones in some cases.
For example, it cannot be used  for the Euler system of 
compressible gas when $\gamma < 5/3$, see \cite{Hsiao}.
Also, the authors in \cite{barbera,MW} 
found some examples when the use of that condition 
singles out physically incorrect solutions.
One can also see the results of the energy dissipation condition 
from a standpoint of relations between self-similar and oscillating solutions
constructed by the method of De Lellis and Sz\'{e}kelyhidi
for $n$-dimensional isentropic Euler system in \cite{F2014} 
and \cite{CK2014}.

In this paper, we will check if it is possible to 
use the energy admissibility condition to single out relevant solutions
of conservation law systems with shadow wave solutions introduced in \cite{mn2010}. 
They are used when the standard elementary waves
do not suffice for solving some Riemann problem. 
The delta function part annihilates 
a Rankine--Hugoniot deficit in the equations and 
the major concern is how to avoid an artificial deficit.
The usual methods used in the literature are the overcompressibility 
(all characteristics run into a shadow wave front) or 
the entropy condition (using convex entropy -- entropy flux pair).
We will compare the energy admissibility condition with those two.
The paper has two main parts. 

In the first part we deal with  three systems describing an isentropic flow of gas with
non-positive pressure. For each of them, 
shadow wave solution appear for some Riemann data and 
we apply the energy admissibility condition to them:

In the first two systems, modeling the pressureless and Chaplygin gas, 
the overcompressibility and the entropy conditions 
suffice to single out physically meaningful solutions.
That is a simple consequence of the fact that the energy is also
a mathematical entropy. 

In the third system, the generalized model of Chaplygin gas,
overcompressibility and the entropy condition were not enough
to singe out a unique solution (see \cite{MN_SR2017}). 
On the other hand, the energy condition successfully singles out a proper solution,
a combination of two shock waves instead of a single shadow wave.

In the second part of the paper, we will answer the following
question: How to choose an approximation of the initial data containing a 
combination of piecewise constant and delta function 
(called the delta Riemann data in the sequel)
and get a physically reasonable unique solution? 
The principal problem here is that two different
approximations of the same measure initial data give two different results in
distributional sense (approximate solutions not having 
the same distributional limit). 
There is also a practical reason for using that initial data: 
A procedure of solving a problem with a piecewise constant approximation 
of smooth initial data involves shadow wave interactions. 
The interaction problem then reduces to a special case of 
the above distributional initial data in a moment of interaction.

The idea is to define so-called backward energy condition.
We postulate that a proper choice of the initial data
approximation should produce a solution with minimal energy dissipation
in sufficiently small time interval. 
Note that the energy cannot rise and that a classical smooth 
solutions have zero energy dissipation. So, the ideal situation is to
choose the approximation such that a corresponding solution is smooth. 
When it is not possible, the postulate means that we choose the approximation
that produces a weak solution ``closest'' to a classical one. 

Let us note that there were incomplete attempts to solve pressureless
gas dynamics and related delta initial data problems by several
authors, but no one raised the question about the uniqueness
of a  solution and a meaning of the distributional initial data in
nonlinear systems (see (\ref{pgd_id})) up to our knowledge.
With the backward energy condition, we can single out
a global solution unique in the distributional 
sense for presureless gas dynamics and for the Chaplygin gas.
For the generalized Chaplygin model, we are also able to single out a proper initial data,
but a distributional limit of the solution will be known after one calculate
all possible wave interactions. That is a separate problem 
left for a further research.

\section{Admissible shadow wave solutions} 

Let
\begin{equation}\label{cl}
 U_{t}+F(U)_{x}=0
\end{equation} 
be a given conservation law system.
A weak solution $U$ is entropy admissible if 
\[\partial_t\eta(U)+\partial_x Q(U)\leq 0\]
holds in distributional sense for each convex entropy pair $(\eta, Q)$.
However, it is not always effective and there 
is also a question concerning its physical background for some systems. 
The energy admissibility condition states that the admissible solution 
is the one that dissipates the energy at the highest possible rate. 
In the recent years, a lot of systems having solutions (weak in some sense) 
with the delta function or its generalization are found. 
Shadow waves (see \cite{mn2010}) are used to solve such systems.
Our first goal is to check if the energy admissibility condition 
can be applied to such solutions.

The total energy of a solution $U$ in the interval $[-L,L]$ 
at time $t>0$ is given by 
\[ H_{[-L,L]}(U(\cdot,t)):=\int_{-L}^{L}\eta(U(x,t))dx.\]
A value $L>0$ is taken to be large enough to avoid a discussion 
about boundary conditions at least for some 
time $t<T$ significantly
greater than zero. To simplify the notation,
we write $H_{[-L,L]}(t)$ instead of $H_{[-L,L]}(U(\cdot,t))$.
Suppose that there is only one shadow wave 
\begin{equation} \label{sdw}
U^\varepsilon(x,t)=\begin{cases}U_{0}(x,t), & x<c(t)-\frac{\varepsilon}{2}t-x_\varepsilon \\
U_{0,\varepsilon}(t), & c(t)-\frac{\varepsilon}{2}t - x_{\varepsilon}< x < 
c(t) \\
U_{1,\varepsilon}(t), & c(t) < x < 
c(t)+\frac{\varepsilon}{2}t + x_{\varepsilon}\\
U_{1}(x,t), & x>c(t)+\frac{\varepsilon}{2}t+x_\varepsilon
\end{cases}
\end{equation}
passing through an interval $[T_{1},T_{2}] \times [-L,L]$.
The function $U_{i}(x,t)$ is smooth solutions
to the above system, the function $U_{i,\varepsilon}$ is 
called the intermediate state, $i=0,1$. The curve $x=c(t)$ is 
the front, $\xi(t)=\lim_{\varepsilon \to 0}(\frac{\varepsilon}{2} t
+x_{\varepsilon})(U_{0,\varepsilon}(t)+U_{1,\varepsilon}(t))$
is called the strength of the shadow wave.
Then
\[ 
\begin{split}
H_{[-L,L]}(t)= & \lim_{\varepsilon\to 0}
\int_{-L}^{L}\eta(U^\varepsilon(x,t))dx \\
= & 
\int_{-L}^{c(t)}\eta(U_{0}(x,t))dx
+\lim_{\varepsilon \to 0} \Big(\frac{\varepsilon}{2} t+x_\varepsilon\Big)\big(\eta(U_{0,\varepsilon}(t))
+\eta(U_{1,\varepsilon}(t))\big)\\
&+\int_{c(t)}^{L}\eta(U_{1}(x,t))dx
\end{split}
\]
for $t\in[T_1,T_2]$.
The energy production of (\ref{sdw}) at a time $t$ is 
\[
\begin{split}
& \frac{d}{dt}H_{[-L,L]}(t)= \lim_{\varepsilon\to 0}
\bigg(\!\Big(c'(t)-\frac{\varepsilon}{2}\Big)
\eta\Big(U_{0}\big(c(t)-\frac{\varepsilon}{2}t,t\big)\Big)
\\
&-\int_{-L}^{c(t)-\varepsilon t/2-x_\varepsilon}\!Q(U_{0}(x,t))_{x} dx 
 +\frac{\varepsilon}{2} \eta(U_{0,\varepsilon}(t))
+\frac{\varepsilon}{2} \eta(U_{1,\varepsilon}(t))\\
& +
 \int_{c(t)-\varepsilon t/2-x_\varepsilon}^{c(t)} 
\eta(U_{0,\varepsilon}(t))_{t} dx +\int_{c(t)}^{c(t)+\varepsilon t/2+x_\varepsilon} 
\eta(U_{1,\varepsilon}(t))_{t} dx \\
&-\Big(c'(t)+\frac{\varepsilon}{2}\Big)
\eta\Big(U_{1}\big(c(t)+\frac{\varepsilon}{2}t+x_\varepsilon,t\big)\Big)
-\int_{c(t)+\varepsilon t/2+x_\varepsilon}^{L} Q(U_{1}(x,t))_{x} dx\bigg),
\end{split}
\]
where we have used that $U_{\varepsilon}$ depends only on $t$,
the above limit exists and $\eta_{t}=-Q_{x}$ for smooth solutions. 
Finally, 
\[
\begin{split}
\frac{d}{dt}H_{[-L,L]}(t) = & -c'(t)
\big( \eta(U_{1}(c(t),t)) - \eta(U_{0}(c(t),t))\big)
+ Q(U_{1}(c(t),t)) \\ 
& - Q(U_{0}(c(t),t)) 
 +Q(U_{0}(-L,t)) - Q(U_{1}(L,t))\\
 &+\lim_{\varepsilon\to 0}\Big(\big(\frac{\varepsilon}{2} t +x_\varepsilon \big)
\frac{d}{dt}\big( \eta(U_{0,\varepsilon}(t))+\eta(U_{1,\varepsilon}(t))\big) \\
&
+ \frac{\varepsilon}{2}\big( \eta(U_{0,\varepsilon}(t))
+\eta(U_{1,\varepsilon}(t))\big)\Big). 
\end{split}
\]
Denote by 
\[\begin{split}
\mathcal{D}(t):= &-c'(t)
\big( \eta(U_{1}(c(t),t)) - \eta(U_{0}(c(t),t))\big)
+ Q(U_{1}(c(t),t)) 
 - Q(U_{0}(c(t),t)) \\
 &+\lim_{\varepsilon\to 0}\frac{d}{dt}\Big(\big(\frac{\varepsilon}{2} 
t+x_\varepsilon\big) \big( \eta(U_{0,\varepsilon}(t))+\eta(U_{1,\varepsilon}(t))\big)\Big)\\
 =& \frac{d}{dt}H_{[-L,L]}(t)-\big(Q(U_{0}(-L,t))-Q(U_{1}(L,t))\big)
\end{split}\]
the local energy production of shadow wave (\ref{sdw}) at time $t$. 
The energy production for a shock wave 
\[ 
U(x,t)=\begin{cases}U_{0}(x,t), & x<c(t) \\
U_{1}(x,t), & x>c(t)
\end{cases}
\]
is 
\[
\begin{split}
\frac{d}{dt}H_{[-L,L]}(t) = & -c'(t)
\big( \eta(U_{1}(c(t),t)) - \eta(U_{0}(c(t),t))\big) \\
& +\big( Q(U_{1}(c(t),t)) - Q(U_{0}(c(t),t)) \big) 
 +Q(U_{0}(-L,t)) - Q(U_{1}(L,t)) \\
 = & \mathcal{D}(t)+Q(U_{0}(-L,t)) - Q(U_{1}(L,t)). 
\end{split}
\]
It equals 
$\frac{d}{dt}H_{[-L,L]}(t)=Q(U_{0}(-L,t)) - Q(U_{1}(L,t))$
for a rarefaction wave (see \cite{CD_2012}), i.e.\ the local 
energy production $\mathcal{D}(t)$ equals zero due to a continuity 
of a rarefaction wave.

\begin{lemma}[\cite{mn2010}]
Denote by $(\eta,Q)$ a convex entropy pair for system (\ref{cl}).
Shadow wave solution (\ref{sdw}) satisfies
the entropy inequality $\partial_t \eta+\partial_x Q\leq 0$ in the sense of
distributions if 
\begin{equation} \label{E}
\begin{split}
\mathcal{D}(t)=-c'(t)[\eta]+[Q]+\lim_{\varepsilon \to
0}\frac{d}{dt}\Big(\big(\frac{\varepsilon}{2} t +x_\varepsilon\big)
\big( \eta(U_{0,\varepsilon}(t))+\eta(U_{1,\varepsilon}(t))\big)\Big) 
&\leq 0\\
\lim_{\varepsilon\to 0}\big(\frac{\varepsilon}{2} t +x_\varepsilon\big)
\Big(c'(t)\big(\eta(U_{0,\varepsilon}(t)+\eta(U_{1,\varepsilon}(t))\big)
-\big(Q(U_{0,\varepsilon}(t))+Q(U_{1,\varepsilon}(t))\big)\Big)&=0,
\end{split}
\end{equation}
where $[\eta]:=\eta(U_1(c(t),t))-\eta(U_0(c(t),t))$
and $[Q]:=Q(U_1(c(t),t))-Q(U_0(c(t),t))$.

\end{lemma}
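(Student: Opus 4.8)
The plan is to identify the distributional limits of $\eta(U^\varepsilon)$ and $Q(U^\varepsilon)$ explicitly and then differentiate; since distributional convergence commutes with differentiation, showing $\partial_t(\lim\eta(U^\varepsilon))+\partial_x(\lim Q(U^\varepsilon))\le 0$ is exactly the assertion that the shadow wave (\ref{sdw}) satisfies the entropy inequality in the sense of distributions. Outside the strips, $U^\varepsilon$ equals the smooth state $U_0$, respectively $U_1$, so there $\eta(U^\varepsilon)$ converges locally in $L^1$ to $\eta(U_0)\mathbf 1_{x<c(t)}+\eta(U_1)\mathbf 1_{x>c(t)}$, and likewise for $Q$. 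Inside the two strips, each of width $d_\varepsilon(t):=\tfrac{\varepsilon}{2}t+x_\varepsilon\to 0$, the net depends only on $t$, so for $\varphi\in C_c^\infty$ one gets $\iint_{\text{strips}}\eta(U^\varepsilon)\varphi\,dx\,dt=\int d_\varepsilon(t)(\eta(U_{0,\varepsilon}(t))+\eta(U_{1,\varepsilon}(t)))\varphi(c(t),t)\,dt+o(1)$, with the $o(1)$ error controlled by $\|\varphi_x\|_\infty\int d_\varepsilon(t)^2(\eta(U_{0,\varepsilon}(t))+\eta(U_{1,\varepsilon}(t)))\,dt$. Hence the strip part concentrates on $x=c(t)$ and, writing $E(t):=\lim_{\varepsilon\to0}d_\varepsilon(t)(\eta(U_{0,\varepsilon}(t))+\eta(U_{1,\varepsilon}(t)))$ and $G(t):=\lim_{\varepsilon\to0}d_\varepsilon(t)(Q(U_{0,\varepsilon}(t))+Q(U_{1,\varepsilon}(t)))$,
\[
\eta(U^\varepsilon)\to\eta(U_0)\mathbf 1_{x<c}+\eta(U_1)\mathbf 1_{x>c}+E(t)\,\delta_{x=c(t)},\qquad Q(U^\varepsilon)\to Q(U_0)\mathbf 1_{x<c}+Q(U_1)\mathbf 1_{x>c}+G(t)\,\delta_{x=c(t)}.
\]

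Next I would differentiate these limits. On the smooth pieces $\eta(U_i)_t+Q(U_i)_x=0$, so the regular parts contribute only the Rankine--Hugoniot jump $(-c'(t)[\eta]+[Q])\,\delta_{x=c(t)}$. For the singular parts, using $\tfrac{d}{dt}\varphi(c(t),t)=\varphi_t(c(t),t)+c'(t)\varphi_x(c(t),t)$ and integrating by parts in $t$ gives, for every $\varphi$,
\[
\big\langle\partial_t(E\,\delta_{x=c})+\partial_x(G\,\delta_{x=c}),\varphi\big\rangle=\int E'(t)\varphi(c(t),t)\,dt+\int(c'(t)E(t)-G(t))\varphi_x(c(t),t)\,dt .
\]
Adding the two contributions, $\big\langle\partial_t\eta(U^\varepsilon)+\partial_x Q(U^\varepsilon),\varphi\big\rangle$ converges, as $\varepsilon\to0$, to $\int(-c'(t)[\eta]+[Q]+E'(t))\varphi(c(t),t)\,dt+\int(c'(t)E(t)-G(t))\varphi_x(c(t),t)\,dt$. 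One could also reach exactly the same split by integrating $-\iint(\eta(U^\varepsilon)\varphi_t+Q(U^\varepsilon)\varphi_x)$ directly over the four regions of (\ref{sdw}), just as in the computation of $\tfrac{d}{dt}H_{[-L,L]}(t)$ above.

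The last step is to read off (\ref{E}). Since $E'(t)=\tfrac{d}{dt}\lim_{\varepsilon\to0}d_\varepsilon(t)(\eta(U_{0,\varepsilon})+\eta(U_{1,\varepsilon}))=\lim_{\varepsilon\to0}\tfrac{d}{dt}\big(d_\varepsilon(t)(\eta(U_{0,\varepsilon})+\eta(U_{1,\varepsilon}))\big)$, the coefficient of $\varphi(c(t),t)$ is precisely $\mathcal{D}(t)$ from the definition preceding (\ref{E}); and $c'(t)E(t)-G(t)=\lim_{\varepsilon\to0}d_\varepsilon(t)\big(c'(t)(\eta(U_{0,\varepsilon})+\eta(U_{1,\varepsilon}))-(Q(U_{0,\varepsilon})+Q(U_{1,\varepsilon}))\big)$ is exactly the left-hand side of the second equation in (\ref{E}). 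That equation therefore annihilates the $\varphi_x$-integral, while the inequality $\mathcal{D}(t)\le0$ together with $\varphi\ge0$ forces the remaining integral to be nonpositive. Hence $\partial_t\eta(U^\varepsilon)+\partial_x Q(U^\varepsilon)\le0$ in $\mathcal{D}'$ in the limit, which is the claim.

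The main obstacle is the first step: making the concentration of the strip contribution rigorous when the intermediate states $U_{i,\varepsilon}(t)$ themselves may blow up as $\varepsilon\to0$, so that only the products $d_\varepsilon(t)\eta(U_{i,\varepsilon}(t))$, $d_\varepsilon(t)Q(U_{i,\varepsilon}(t))$ and the specific combinations entering (\ref{E}) remain under control -- this boundedness is precisely what gives the $o(1)$ error in the first paragraph and the existence of $E$ and $G$ -- together with justifying the interchange of $\tfrac{d}{dt}$ and $\lim_{\varepsilon\to0}$ used to recognise $E'(t)$ inside $\mathcal{D}(t)$. Flux terms at $x=\pm L$ and boundary terms at $t=T_1,T_2$ do not enter, because the test functions are compactly supported in the open strip through which the shadow wave passes.
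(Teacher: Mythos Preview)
The paper does not prove this lemma; it is quoted from \cite{mn2010} and stated without argument. Your proposal therefore cannot be compared line by line to a proof in the present paper, but it can be compared to the standard derivation in \cite{mn2010}, which the paper also mirrors in the proof of Lemma~\ref{lemma:affine}.

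Your argument is correct and essentially equivalent to that standard derivation. The route taken in \cite{mn2010} is the direct one you mention as an alternative: write $-\iint(\eta(U^\varepsilon)\varphi_t+Q(U^\varepsilon)\varphi_x)\,dx\,dt$, split the $x$-integral over the four regions of (\ref{sdw}), integrate by parts separately on each piece, and collect the boundary terms along $x=c(t)\pm d_\varepsilon(t)$ and $x=c(t)$. After Taylor expanding $\varphi$ and $\varphi_x$ about $(c(t),t)$ one obtains, up to $\mathcal{O}(\varepsilon)$, exactly the two integrals $\int\mathcal{D}(t)\varphi(c(t),t)\,dt$ and $\int(c'(t)E(t)-G(t))\varphi_x(c(t),t)\,dt$ that you isolate. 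Your approach differs only in the packaging: you first pass to the distributional limits of $\eta(U^\varepsilon)$ and $Q(U^\varepsilon)$, identify the singular parts $E(t)\delta_{x=c(t)}$ and $G(t)\delta_{x=c(t)}$, and then differentiate. The direct method has the small advantage that it never needs the interchange $E'(t)=\lim_{\varepsilon\to0}\tfrac{d}{dt}(\cdots)$ you flag as an obstacle, since the quantity $\lim_{\varepsilon\to0}\tfrac{d}{dt}\big(d_\varepsilon(t)(\eta(U_{0,\varepsilon})+\eta(U_{1,\varepsilon}))\big)$ appears directly, exactly as written in (\ref{E}); your route, on the other hand, makes the structure of the limiting measure more transparent.
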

\begin{remark}
For gas dynamics systems endowed with entropy pair $(\eta
,Q)$, $Q=u\eta$, the second condition in (\ref{E}) 
is automatically satisfied for shadow waves. That follows 
from the law of mass conservation. 
\end{remark}
\begin{lemma}\label{lemma:affine}
Denote by $(\eta,Q)$ a convex energy pair for system (\ref{cl}).
Suppose that shadow wave (\ref{sdw}) satisfies condition (\ref{E})
and $F(U_{0})\neq F(U_{1})$. Then, condition (\ref{E}) is also satisfied 
and $\mathcal{D}(t)$ is invariant under the transformation 
\begin{equation}\label{affine_eta}
\bar{\eta}(U)=\eta(U) +\sum_{j=1}^n a_{j} u^{j},\; 
\bar{Q}(U)=Q(U) +\sum_{j=1}^n a_{j} f^{j}(U)+\bar{c},
\end{equation} 
$U=(u^{1},\ldots,u^{n})$, $F(U)=(f^{1}(U),\ldots, f^{n}(U))$,
for every $a=(a_{1},\ldots,a_{n})\in\mathbb{R}^n$ and $\bar{c}\in\mathbb{R}$. 
\end{lemma}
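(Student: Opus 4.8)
The plan is to exploit that $(\bar\eta,\bar Q)$ is obtained from $(\eta,Q)$ by adding a linear combination of the conserved densities $u^j$ together with the matching combination of their fluxes $f^j(U)$ (plus a constant in $\bar Q$), and to feed the generalized Rankine--Hugoniot conditions satisfied by the shadow wave solution (\ref{sdw}) into the formula for $\mathcal D$. Write $w_\varepsilon(t):=\tfrac\varepsilon2 t+x_\varepsilon$, $[U]:=U_1(c(t),t)-U_0(c(t),t)$, $[F]:=F(U_1(c(t),t))-F(U_0(c(t),t))$, and $a\cdot V:=\sum_{j=1}^n a_jv^j$ for $V=(v^1,\dots,v^n)$; recall that the strength of (\ref{sdw}) is $\xi(t)=\lim_{\varepsilon\to0}w_\varepsilon(t)\big(U_{0,\varepsilon}(t)+U_{1,\varepsilon}(t)\big)$. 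Since (\ref{sdw}) is a weak solution of (\ref{cl}), it satisfies
\[ \xi'(t)=c'(t)[U]-[F],\qquad c'(t)\,\xi(t)=\lim_{\varepsilon\to0}w_\varepsilon(t)\big(F(U_{0,\varepsilon}(t))+F(U_{1,\varepsilon}(t))\big), \]
the first condition balancing the Dirac part and the second annihilating the $\partial_x\delta$ part of $U^\varepsilon_t+F(U^\varepsilon)_x$ as $\varepsilon\to0$ (see \cite{mn2010}). I also record that $\bar\eta$ has the same Hessian as $\eta$, hence stays convex, and that $\partial_t\bar\eta+\partial_x\bar Q=\partial_t\eta+\partial_x Q+\sum_{j=1}^n a_j\big(\partial_t u^j+\partial_x f^j(U)\big)=0$ along smooth solutions, so $(\bar\eta,\bar Q)$ is again a convex energy pair.

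First I would substitute (\ref{affine_eta}) into the definition of the local energy production and compute $\overline{\mathcal D}(t)$ (the quantity $\mathcal D(t)$ built from $(\bar\eta,\bar Q)$). The constant $\bar c$ drops out of every jump, so $[\bar\eta]=[\eta]+a\cdot[U]$ and $[\bar Q]=[Q]+a\cdot[F]$; and by linearity, using the existence of the limit (the very interchange of $\lim_{\varepsilon\to0}$ and $\tfrac{d}{dt}$ already used to define $\mathcal D$),
\[ \lim_{\varepsilon\to0}\frac{d}{dt}\Big(w_\varepsilon(t)\big(\bar\eta(U_{0,\varepsilon})+\bar\eta(U_{1,\varepsilon})\big)\Big)=\lim_{\varepsilon\to0}\frac{d}{dt}\Big(w_\varepsilon(t)\big(\eta(U_{0,\varepsilon})+\eta(U_{1,\varepsilon})\big)\Big)+\frac{d}{dt}\big(a\cdot\xi(t)\big). \]
Adding the three contributions gives $\overline{\mathcal D}(t)-\mathcal D(t)=-c'(t)\,a\cdot[U]+a\cdot[F]+a\cdot\xi'(t)=a\cdot\big(\xi'(t)-c'(t)[U]+[F]\big)$, which vanishes by the first Rankine--Hugoniot condition. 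This proves both that $\mathcal D(t)$ is invariant and, since $\mathcal D(t)\le0$ by hypothesis, that the first inequality of (\ref{E}) holds for $(\bar\eta,\bar Q)$.

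For the second identity in (\ref{E}) I would expand
\[ c'(t)\big(\bar\eta(U_{0,\varepsilon})+\bar\eta(U_{1,\varepsilon})\big)-\big(\bar Q(U_{0,\varepsilon})+\bar Q(U_{1,\varepsilon})\big) \]
as the corresponding expression for $(\eta,Q)$ plus $c'(t)\,a\cdot(U_{0,\varepsilon}+U_{1,\varepsilon})-a\cdot\big(F(U_{0,\varepsilon})+F(U_{1,\varepsilon})\big)-2\bar c$, multiply through by $w_\varepsilon(t)$, and let $\varepsilon\to0$. Since $w_\varepsilon(t)\to0$ for fixed $t$, the constant $-2\bar c$ disappears; the $(\eta,Q)$-part vanishes by (\ref{E}); and the remainder equals $a\cdot\big(c'(t)\,\xi(t)-\lim_{\varepsilon\to0}w_\varepsilon(t)(F(U_{0,\varepsilon})+F(U_{1,\varepsilon}))\big)$, which is $0$ by the second Rankine--Hugoniot condition. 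Hence condition (\ref{E}), read for $(\bar\eta,\bar Q)$, is satisfied.

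The algebra above is essentially bookkeeping with the affine change (\ref{affine_eta}); the real content — and the hard part — is that one must be entitled to invoke the two generalized Rankine--Hugoniot conditions, which presupposes that (\ref{sdw}) is a genuine shadow wave. This is where the hypothesis $F(U_0)\neq F(U_1)$ enters: it guarantees that the front carries a nonzero strength and is not an ordinary Rankine--Hugoniot discontinuity, so that the framework of \cite{mn2010} applies and, in particular, $\lim_{\varepsilon\to0}w_\varepsilon(t)F(U_{i,\varepsilon}(t))$ is a well-controlled quantity even though the intermediate states $U_{i,\varepsilon}$ may blow up as $\varepsilon\to0$. Carefully justifying those two limit interchanges, rather than the computation itself, is the only delicate point.
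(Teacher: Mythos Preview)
Your argument is correct and follows essentially the same route as the paper: both proofs exploit that the shadow wave is an approximate solution of (\ref{cl}), which in the paper is expressed as $\Lambda_1(t),\Lambda_2(t)=\mathcal O(\varepsilon)$ and in your write-up appears as the two limiting Rankine--Hugoniot identities $\xi'(t)=c'(t)[U]-[F]$ and $c'(t)\xi(t)=\lim_{\varepsilon\to0}w_\varepsilon(t)\big(F(U_{0,\varepsilon})+F(U_{1,\varepsilon})\big)$. The only stylistic difference is that the paper works at the level of the distributional pairing $\langle\partial_t\bar\eta(U^\varepsilon)+\partial_x\bar Q(U^\varepsilon),\varphi\rangle$ and invokes Lemma~10.1 of \cite{mn2010}, whereas you compute $\overline{\mathcal D}(t)-\mathcal D(t)$ directly from the defining formula; the bookkeeping is the same.

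One small comment: your reading of the hypothesis $F(U_0)\neq F(U_1)$ is not quite what is going on. Neither your computation nor the paper's uses it inside the proof; the condition is there for the application immediately following the lemma, where one wants to choose $a$ and $\bar c$ so that $\bar Q(U_0(-L,t))=\bar Q(U_1(L,t))=0$. Subtracting these two equations forces $a\cdot[F]=-[Q]$, which is solvable precisely when $[F]\neq0$. So the role of $F(U_0)\neq F(U_1)$ is to guarantee the no-flow normalization (\ref{no-flow}), not to make the shadow-wave framework applicable.
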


Note that the pair $(\bar{\eta},\bar{Q})$
is also a convex energy pair for system (\ref{cl})
since we have added the affine term to $\eta$. 
\begin{proof}
One can easily prove that $D\bar{Q}=D\bar{\eta}DF$ and 
$D^2\eta=D^2\bar{\eta}.$ To prove 
\[\langle\partial_t \bar{\eta}(U^\varepsilon)
+\partial_x \bar{Q}(U^\varepsilon),\varphi\rangle
\approx\langle\partial_t {\eta}(U^\varepsilon)+\partial_x {Q}(U^\varepsilon),
\varphi\rangle, \; \varepsilon\to 0,\]
for any test function $\varphi\in C_{0}^{\infty}$, we use the results from 
\cite{mn2010}. We have
\[\begin{split}
\Lambda_{1}(t) &:=-c'(t)[U]+[F(U)]+\frac{d}{dt}\Big(\big(\frac{\varepsilon}{2}t+x_\varepsilon\big)
\big(U_{0,\varepsilon}(t)+U_{1,\varepsilon}(t)\big)\Big)
= \mathcal{O}(\varepsilon)\\
\Lambda_{2}(t) &:=\Big(\frac{\varepsilon}{2}t+x_\varepsilon\Big)\Big(c'(t)
\big(U_{0,\varepsilon}(t)+U_{1,\varepsilon}(t)\big)
-\big(F(U_{0,\varepsilon}(t))
+F(U_{1,\varepsilon}(t))\big)\Big) =\mathcal{O}(\varepsilon)
\end{split}\]
since (\ref{sdw}) is approximate solution to (\ref{cl}).
Using the procedure from the proof of Lemma 10.1 in \cite{mn2010} we get
\[\begin{split}
&\langle\partial_t \bar{\eta}(U^\varepsilon)+\partial_x
\bar{Q}(U^\varepsilon),\varphi\rangle=\langle\partial_t
{\eta}(U^\varepsilon)+\partial_x {Q}(U^\varepsilon),\varphi\rangle\\
& + \underbrace{\sum_{j=1}^{n} a_{j}\Big(\int_{0}^{\infty}
\Lambda_{1}^{j}(t)\varphi(c(t),t)\,dt + \int_{0}^{\infty}
\Lambda_{2}^{j}(t)\partial_x\varphi(c(t),t)\,dt
\Big)}_{=\mathcal{O}(\varepsilon)}+\mathcal{O}(\varepsilon),
\end{split}\]
where $\Lambda_{k}(t)=\big(\Lambda_{k}^{1}(t),\ldots,\Lambda_{k}^{n}(t)\big)$,
$k=1,2.$
\end{proof}
The lemma holds for classical weak solutions, too, as proved in 
\cite{CD_1973}. Its consequence is that 
that one can choose constants $a_i,$ $i=1,\ldots,n$ and $\bar{c}$ in such a way that 
\begin{equation}\label{no-flow}
Q(U_0(- L,t))=0,\; Q(U_1( L,t))=0.
\end{equation}
Then, there is no energy flow on the boundaries and the total energy decreases
\[\frac{d}{dt}\bar{H}_{[-L,L]}(t)\leq 0.\]
($\bar{H}_{[-L,L]}(t)$ denotes the total energy corresponding to
$\bar{\eta}.$)
So, for an energy pair $(\eta,Q)$, we can take affine transformation
(\ref{affine_eta}) such that (\ref{no-flow}) holds and the total 
energy decreases. Thus, the following definition is invariant under 
a change of energy flux at $x=-L$ and $x=L$ for shadow wave solutions also.
\begin{definition} (\cite{CD_1973})
The solution $U(x,t)$ to the system (\ref{cl}) satisfies energy 
admissibility condition if it minimizes
\[\min \Big\{\frac{d\bar{H}_{[-L,L]}(U(\cdot,t))}{dt}\Big|_{t=t_0+0}:\; U \text{ is
weak solution in } (t_{0},t)\Big\}.\]
\end{definition}

\subsection{Pressureless gas dynamics system} 

Take an isentropic, inviscid and compressible flow
of a gas in the absence of internal and external forces.
Its dynamics is described by mass and linear momentum conservation laws
\begin{equation}\label{pgd}
\begin{split}
\partial_t \rho + \partial_x(\rho u) & = 0\\
\partial_t (\rho u) + \partial_x(\rho u^2) & = 0.
\end{split}
\end{equation}
Here $\rho\geq 0$ and $u$ denote density and velocity of a fluid, respectively.
The model can be obtained from the compressible Euler equations of gas
dynamics by letting the pressure tend to zero. It is used to describe 
a formation of large structures in the Universe and behavior of particles that
stick under collision (\cite{b_g, WRS, Zeldovich}).

System (\ref{pgd}) is weakly hyperbolic with both characteristic fields
being linearly degenerate and characteristic speeds equal to velocity,
$\lambda_1(\rho,u)=\lambda_2(\rho,u)=u$.

The solution to system (\ref{pgd}) with the Riemann initial data
\begin{equation}\label{pgd_ri}
(\rho, u)(x,0)=\begin{cases}
(\rho_0,u_0), & x<0\\
(\rho_1,u_1), & x>0
\end{cases}
\end{equation} 
consists of two contact discontinuities connected by the vacuum state
\begin{equation*}
U(x,t)=\begin{cases}
(\rho_0,u_0), & x<u_0 t\\
(0,\frac{x}{t}), & u_0 t<x<u_{1}t\\
(\rho_{1},u_{1}), & x>u_{1} t
\end{cases}
\end{equation*}
if $u_0\leq u_1$. This solution will be called CD wave combination.
Otherwise, there exists a singular solution given in the form of a delta
shock approximated by a shadow wave. 
It is given by
\begin{equation}\label{wSDW_l,r}
U^\varepsilon(x,t)=\begin{cases}
(\rho_0,u_0),	&
x<c-\frac{\varepsilon}{2}t\\
(\rho_\varepsilon,u_s), &
c-\frac{\varepsilon}{2}t<x<c+\frac{\varepsilon}{2}t \\
(\rho_1,u_1), & x>c+\frac{\varepsilon}{2}t.
\end{cases}
\end{equation}
It is overcompressive,  $u_1\leq u_s(0)\leq u_0$,
$u_{s}=y:=\frac{\sqrt{\rho_0}u_0+\sqrt{\rho_1}u_1}{\sqrt{\rho_0}+\sqrt{\rho_1}}$,
and its strength equals $\xi=\sqrt{\rho_0\rho_1}(u_0-u_1)t$ (only for $\rho$-variable,
the velocity is bounded).

\subsection{Chaplygin gas}
The gas dynamics model with negative pressure
\begin{equation}\label{chap}
\begin{split}
\partial_{t}\rho + \partial_{x} (\rho u) & =0 \\
\partial_{t} (\rho u)+ \partial_{x} \Big( \rho
u^{2}-\frac{1}{\rho}\Big) & =0
\end{split}
\end{equation}
is introduced to describe the aerodynamics force acting on 
a wing of an airplane in \cite{Chaplygin}. In recent years, it is found that
it can be used in cosmology as a model of the dark energy in the Universe,
see \cite{ph_dark}.
Here $u$ and $\rho$ denote the velocity and the density of a fluid,
respectively.

System (\ref{chap}) is strictly hyperbolic having two distinct eigenvalues
$\lambda_1(\rho,u)=u-\frac{1}{\rho}<\lambda_2(\rho,u)=u+\frac{1}{\rho}$ 
with both fields being linearly degenerate. 
If $\lambda_1(\rho_0,u_0)<\lambda_2(\rho_1,u_1)$, there exists classical
solution to the Riemann problem (\ref{chap}, \ref{pgd_ri}).
It consists of two contact discontinuities connected by the
constant intermediate state
$(\rho_m,u_m)=\big(\frac{2}{\lambda_2(\rho_1,u_1)-\lambda_1(\rho_0,u_0)},
\frac{\lambda_2(\rho_1,u_1)+\lambda_1(\rho_0,u_0)}{2}\big)$ (see
\cite{mn2014}). Otherwise, the solution is overcompressive
shadow wave  propagating with the constant speed
\begin{equation}\label{speed_chap}
s=\frac{[\rho u]}{[\rho]}+\frac{1}{[\rho]}\sqrt{[\rho u]^2-[\rho][\rho
u^2-\rho^{-1}]}=\frac{[\rho u]}{[\rho]}+\frac{\kappa}{[\rho]},
\end{equation}
where $[\cdot]$ is a jump across a discontinuity,
and $\kappa$ is the Rankine-Hugoniot deficit, $\kappa:=s[\rho]-[\rho u]$. 
It is worthwhile to mention that
the term under the square root in (\ref{speed_chap}) can be written as 
\[\rho_{0}\rho_{1}\big(\lambda_{1}(\rho_{0},
u_{0})-\lambda_{1}(\rho_{1},u_{1})\big)\big(\lambda_{2}(\rho_{0},
u_{0})-\lambda_{2}(\rho_{1},u_{1})\big).\]
So, the shadow wave solution exists only if
$\lambda_{1}(\rho_{0},u_{0})\geq \lambda_{2}(\rho_{1},u_{1})$,
i.e.\ if there is no classical weak solution.
Also, the overcompressibility and the entropy condition 
for any entropy pair are equivalent. The most interesting
choice is $\eta$ being the physical energy density
$\eta=\rho u^2+\frac{1}{\rho}$ with corresponding flux
$Q=\rho u^3-\frac{u}{\rho}$ because this choice leads directly to 
the fact that the energy admissibility condition is satisfied, too. 
A solution to Riemann problem for Chaplygin gas system is weakly unique 
(the distributional limit is unique).

\subsection{Generalized Chaplygin gas model}

In this section, we will see a real use of the energy admissibility condition 
for unbounded solutions.
It suffices to single out a proper (physically relevant) solution contrary to 
other admissibility criteria.

The generalized Chaplygin gas model consists of
the mass and momentum conservation laws
\begin{equation}\label{genchap}
\begin{split}
\partial_{t}\rho + \partial_{x} (\rho u) & =0 \\
\partial_{t} (\rho u)+ \partial_{x} \Big( \rho
u^{2}-\frac{1}{\rho^{\alpha}}\Big) & =0,
\end{split}
\end{equation}
where $0<\alpha<1$. That is a strictly hyperbolic system with eigenvalues
$\lambda_1(\rho,u)=u-\sqrt{\alpha}\rho^{-\frac{1+\alpha}{\alpha}}
<\lambda_2(\rho,u)=u+\sqrt{\alpha}\rho^{-\frac{1+\alpha}{\alpha}}$.
A classical weak solution to the Riemann
problem (\ref{genchap}, \ref{pgd_ri}) exists
if the right initial state $(\rho_1,u_1)$
is above the curve
\[
\Gamma_{ss}(\rho_{0},u_{0}):\;u=u_0- \rho_{0}^{-\frac{1+\alpha}{2}}-
\rho^{-\frac{1+\alpha}{2}}.
\]
That solution is a combination of elementary waves (shock and rarefaction
waves) since both characteristic fields are genuinely nonlinear. 
For $(\rho_{1},u_{1})$ below and at 
the curve $\Gamma_{ss}(\rho_{0},u_{0})$ 
a unique solution is the shadow wave as proved in \cite{MN_SR2017}.
\begin{lemma}
There exists a shadow wave solution (\ref{sdw}) with $U=(\rho,u)$
to (\ref{genchap}, \ref{pgd_ri}) if
\[\rho_{0}\rho_{1}(u_{0}-u_{1})^2>(\rho_{0}-\rho_{1})
(\rho_{1}^{-\alpha}-\rho_{0}^{-\alpha}).\]
Its speed is given by
\begin{equation*}
c=\begin{cases}
\frac{u_1\rho_1-u_0\rho_0+\kappa_{1}}{\rho_1-\rho_0},
& \text{if } \rho_{0}\neq
\rho_{1}\\
\frac{1}{2}(u_{0}+u_{1}), & \text{if } \rho_{0}= \rho_{1},
\end{cases}
\end{equation*}
where
$\kappa_1=\lim_{\varepsilon\to 0} \varepsilon
\rho_\varepsilon=\sqrt{\rho_0\rho_1(u_0-u_1)^2
-(\rho_0-\rho_1)(\rho_1^{-\alpha}-\rho_0^{-\alpha})}$
denotes Ran\-ki\-ne-Hugoniot deficit and $\delta$ is supported by the line
$x=c(t)=ct$.
\end{lemma}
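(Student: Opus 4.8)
The plan is to substitute the self-similar shadow-wave ansatz directly into system (\ref{genchap}) and to impose the approximate-solution conditions recorded in the proof of Lemma~\ref{lemma:affine}. Since the data (\ref{pgd_ri}) is homogeneous of degree zero, I look for a wave of the form (\ref{sdw}) with a \emph{linear} front $c(t)=ct$ (so the concentrated measure is automatically supported on $x=ct$), vanishing shift $x_\varepsilon$ kept only as a free higher-order parameter, constant outer states $(\rho_0,u_0)$, $(\rho_1,u_1)$, and a (single, or doubled with $U_{0,\varepsilon}=U_{1,\varepsilon}$) constant intermediate state whose density blows up, $\rho_\varepsilon=\kappa_1/\varepsilon+o(1/\varepsilon)$, while the velocity stays bounded, $u_\varepsilon\to u_s$. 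With the flux $F(\rho,\rho u)=(\rho u,\ \rho u^2-\rho^{-\alpha})$, the statement ``$U^\varepsilon$ is an approximate solution of (\ref{genchap})'' reduces, exactly as in \cite{mn2010}, to $\Lambda_1(t)=\mathcal{O}(\varepsilon)$ and $\Lambda_2(t)=\mathcal{O}(\varepsilon)$, where $\Lambda_1,\Lambda_2$ are the vector expressions appearing in the proof of Lemma~\ref{lemma:affine}.

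Next I read off the balance relations order by order. Because $0<\alpha<1$, the pressure of the intermediate state obeys $\rho_\varepsilon^{-\alpha}=\mathcal{O}(\varepsilon^{\alpha})\to 0$, so it never enters the leading balance, and the only surviving pressure term is the finite jump $[\rho u^2-\rho^{-\alpha}]=[\rho u^2]+\rho_0^{-\alpha}-\rho_1^{-\alpha}$ across the outer states. The $\rho$-component of $\Lambda_1=\mathcal{O}(\varepsilon)$ gives at order $\varepsilon^0$ the Rankine--Hugoniot deficit identity $\kappa_1=c[\rho]-[\rho u]$; the $\rho$-component of $\Lambda_2=\mathcal{O}(\varepsilon)$ forces $u_s=c$ (its leading term is $t\kappa_1(c-u_s)$, which must vanish); and the $\rho u$-component of $\Lambda_1=\mathcal{O}(\varepsilon)$ then gives $c\kappa_1=c[\rho u]-[\rho u^2-\rho^{-\alpha}]$, the remaining $\rho u$-component of $\Lambda_2$ being automatically $\mathcal{O}(\varepsilon)$. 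Eliminating $\kappa_1$ between the first and third relations yields the quadratic
\[
[\rho]\,c^{2}-2[\rho u]\,c+[\rho u^{2}-\rho^{-\alpha}]=0 .
\]

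It remains to solve this quadratic and to locate when it admits a real solution with $\kappa_1>0$. A direct computation gives
\[
[\rho u]^{2}-[\rho]\,[\rho u^{2}-\rho^{-\alpha}]
=\rho_0\rho_1(u_0-u_1)^{2}-(\rho_0-\rho_1)(\rho_1^{-\alpha}-\rho_0^{-\alpha}),
\]
which is precisely $\kappa_1^{2}$; since $\rho\mapsto\rho^{-\alpha}$ is decreasing we have $(\rho_0-\rho_1)(\rho_1^{-\alpha}-\rho_0^{-\alpha})\ge 0$, so this quantity is non-negative and is strictly positive exactly under the hypothesis $\rho_0\rho_1(u_0-u_1)^{2}>(\rho_0-\rho_1)(\rho_1^{-\alpha}-\rho_0^{-\alpha})$, which thus simultaneously guarantees real roots and $\kappa_1>0$ (hence a genuine, $\rho_\varepsilon\to\infty$, singular wave). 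For $\rho_0\neq\rho_1$ the roots are $c=\frac{[\rho u]\pm\kappa_1}{[\rho]}=\frac{u_1\rho_1-u_0\rho_0\pm\kappa_1}{\rho_1-\rho_0}$; the ``$+$'' branch is the one retained — it specializes to the known formulas for the pressureless and Chaplygin cases, is consistent with $u_s=c$, and is the branch giving an overcompressive front. For $\rho_0=\rho_1$ the quadratic degenerates to $-2[\rho u]c+[\rho u^{2}]=0$, whence $c=\tfrac12(u_0+u_1)$ (here $u_0\neq u_1$, otherwise $\kappa_1=0$), with $\kappa_1=\rho_0\,|u_0-u_1|$.

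The main obstacle, and essentially the only real work, is to upgrade this leading-order matching to an actual approximate solution: one must exhibit the full expansions of $\rho_\varepsilon(t)$, $u_\varepsilon(t)$ (and, if needed, $x_\varepsilon$ or the second intermediate state) so that $\Lambda_1(t),\Lambda_2(t)=\mathcal{O}(\varepsilon)$ holds \emph{exactly}. This is handled as in \cite{mn2010,MN_SR2017} by choosing the sub-leading corrections to cancel the residual $\mathcal{O}(1)$ terms — the freedom in the expansion of $\rho_\varepsilon$ together with $x_\varepsilon$ being exactly sufficient — while $\rho_\varepsilon\ge 0$ is automatic from $\kappa_1>0$ and the sub-leading pressure contribution, of size $\mathcal{O}(\varepsilon^{1+\alpha})=\mathcal{O}(\varepsilon)$, is harmless. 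The companion fact that the ``$+$'' front satisfies the characteristic (overcompressibility) inequalities for $\lambda_1(\rho,u)=u-\sqrt{\alpha}\rho^{-(1+\alpha)/\alpha}$, $\lambda_2(\rho,u)=u+\sqrt{\alpha}\rho^{-(1+\alpha)/\alpha}$ under the stated hypothesis — so that $U^\varepsilon$ legitimately qualifies as the shadow-wave solution in the sense of the earlier sections — is the other point at which I would invoke \cite{MN_SR2017}.
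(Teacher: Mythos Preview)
Your argument is correct and follows exactly the standard shadow-wave construction: insert the self-similar ansatz, read off $\Lambda_1,\Lambda_2=\mathcal{O}(\varepsilon)$ component-wise to obtain $\kappa_1=c[\rho]-[\rho u]$, $u_s=c$, and $c\kappa_1=c[\rho u]-[\rho u^2-\rho^{-\alpha}]$, then solve the resulting quadratic and identify its discriminant with $\kappa_1^2$. The paper does not give its own proof of this lemma but cites \cite{MN_SR2017}, and what you have written is precisely the derivation carried out there; your observation that $\rho_\varepsilon^{-\alpha}=\mathcal{O}(\varepsilon^\alpha)$ disappears from the leading balance (this is where $0<\alpha<1$ is used, and is why the formulas coincide structurally with the pressureless case) and your deferral of the sub-leading matching and overcompressibility check to \cite{mn2010,MN_SR2017} are both appropriate.
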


In most of the cases from the literature, it suffices to
use overcompressibility condition to exclude non-wanted singular 
solutions containing the delta function.
However, that is not the case here:
There is an area above $\Gamma_{ss}$ where both
overcompressive shadow wave and classical two shock solution exist.
The first shock propagates with speed $c_{1}=u_0-A(\rho_{0},\rho_{m})$
and joins $(\rho_{0},u_{0})$ on the left to $(\rho_{m},u_{m})$ on the right,
while the second one propagates with speed
$c_{2}=u_{m}+A(\rho_{m},\rho_{1})=u_{1}+A(\rho_{1},\rho_{m})$
and joins $(\rho_{m},u_{m})$ to $(\rho_{1},u_{1})$. 
Here,
\begin{equation}\label{A_0m}
A(\rho_{i},\rho_{j}):=\sqrt{\frac{\rho_{j}}{\rho_{i}}
\frac{\rho_{i}^{-\alpha}-\rho_{j}^{-\alpha}}{\rho_{j}-\rho_{i}}}.
\end{equation}
and intermediate state $(\rho_m,u_m)$ is determined by the following relations
\begin{equation}\label{intermediate}
\begin{split}
u_m & =u_0-\sqrt{\frac{\rho_m-\rho_0}{\rho_0\rho_m}
(\rho_0^{-\alpha}-\rho_m^{-\alpha})},
\quad \rho_m>\rho_0 \\
u_m &=u_1+\sqrt{\frac{\rho_m-\rho_1}{\rho_1\rho_m}
(\rho_1^{-\alpha}-\rho_m^{-\alpha})},
\quad \rho_m>\rho_1.
\end{split}
\end{equation}

Thus, one has to find an additional admissibility criterion which will
exclude the overcompressive shadow wave solution 
in the area above $\Gamma_{ss}$ curve. 
The system (\ref{genchap}) possesses infinitely many convex entropies, so
one can use entropy condition given in \cite{mn2010}.
The shadow wave solution (\ref{wSDW_l,r}) with
a constant speed $c$ is admissible if 
\begin{equation}\label{entr}
\begin{split}
\mathcal{D}:= 
 -c[\eta]+[Q]+\lim_{\varepsilon\to 0}\varepsilon
\eta(\rho_\varepsilon, u_\varepsilon) & \leq 0\\
 \lim_{\varepsilon\to 0} (- \varepsilon c \eta(\rho_\varepsilon,
u_\varepsilon)+\varepsilon Q(\rho_\varepsilon, u_\varepsilon)) & =0
\end{split}
\end{equation}
for each convex entropy pair $(\eta, Q)$. 
The idea was implemented in \cite{MN_SR2017}, but without a complete success
probably due to the lack of precise approximations of the modified Bessel
functions of the second kind. However, all numerical simulations have shown
that shadow wave solution is not admissible above $\Gamma_{ss}$. 
Here, we are about to show that energy admissibility condition 
successfully excludes the unwanted shadow wave solution in that area using 
the following energy--energy flux pair:
\begin{equation}\label{en_pair}
\eta(\rho,u)=\frac{1}{2}\rho u^2+\frac{1}{1+\alpha}\rho^{-\alpha} \text{ and }
Q(\rho,u)=\frac{1}{2}\rho u^3-\frac{\alpha}{1+\alpha}\rho^{-\alpha}u.
\end{equation}

\subsubsection{The energy dissipation}

The energy admissibility condition will exclude the unwanted
shadow wave solution above $\Gamma_{ss}$ if its energy production is
greater then energy production of classical $S_1+S_2$ solution. 
The local energy production of the two shock combination connecting states $(\rho_0,u_0)$ and $(\rho_1,u_1)$ is constant and
given by
\[\begin{split}
\mathcal{D}^{cl} :=&
-c_1(\eta(\rho_m,u_m)-\eta(\rho_0,u_0))+Q(\rho_m,u_m)-Q(\rho_0,u_0) \\
& -c_2(\eta(\rho_1,u_1)-\eta(\rho_m,u_m))+Q(\rho_1,u_1)-Q(\rho_m,u_m),
\end{split}\]
while
\begin{equation*}
\begin{split}
\mathcal{D}^{sdw} &:=-c[\eta]+[Q]+\lim_{\varepsilon\to 0}\varepsilon
\eta(\rho_\varepsilon, u_\varepsilon)= -c[\eta]+[Q]+\frac{1}{2}c^2\kappa_1
\end{split}
\end{equation*}
is the local energy production for the shadow wave.
The following two relations 
\begin{equation}\label{u_kappa}
\begin{split}
u_0-u_1
&=\frac{\rho_m-\rho_0}{\rho_m}A(\rho_{0},\rho_{m})
+\frac{\rho_m-\rho_1}{\rho_m}A(\rho_{1},\rho_{m}),\\
\kappa_1 &=\sqrt{(\rho_m-\rho_0)(\rho_m-\rho_1)}(c_2-c_1)
\end{split}
\end{equation}
are consequences of (\ref{intermediate}).
Suppose that $\rho_1\neq \rho_0.$ Then, (\ref{u_kappa}) implies
\begin{equation}\label{rel_c}
\begin{split}
c-c_1&=\frac{\rho_1}{\rho_1-\rho_0}(u_1-u_0)
+\frac{\kappa_1}{\rho_1-\rho_0}+A(\rho_{0},\rho_{m})\\
&=\frac{\rho_m-\rho_1}{\rho_1-\rho_0}
\Big(-1+\sqrt{\frac{\rho_m-\rho_0}{\rho_m-\rho_1}}\Big)(c_2-c_1)>0\\
c-c_2 &=
\frac{\rho_0}{\rho_1-\rho_0}(u_1-u_0)+\frac{\kappa_1}{\rho_1-\rho_0}
-A(\rho_{1},\rho_{m})\\
&=\frac{\rho_m-\rho_0}{\rho_1-\rho_0}
\Big(-1+\sqrt{\frac{\rho_m-\rho_1}{\rho_m-\rho_0}}\Big)(c_2-c_1)<0.
\end{split}
\end{equation}
We have used that
$\frac{\rho_m-\rho_0}{\rho_m-\rho_1}>1$ if $\rho_1>\rho_0$. The inequalities 
in (\ref{rel_c}) have the opposite signs if $\rho_1<\rho_0.$
That proves that the speed of shadow wave is between shock speeds, $c_1<c<c_2.$
If $\rho_0=\rho_1$, then
$c-c_1=c_2-c=\frac{\rho_{0}}{\rho_{m}}A(\rho_{0},\rho_{m}).$

\begin{theorem}
The solution to the Riemann problem (\ref{genchap},
\ref{pgd_ri}) is a combination of classical elementary waves if
\begin{equation*} 
u_1> u_0-\rho_{0}^{-\frac{1+\alpha}{2}}- \rho_{1}^{-\frac{1+\alpha}{2}},
\end{equation*}
or shadow wave (\ref{sdw}) otherwise.
Both solutions satisfy the energy admissibility condition.
\end{theorem}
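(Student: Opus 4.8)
By Lemma~\ref{lemma:affine} and (\ref{no-flow}) one may replace the energy pair (\ref{en_pair}) by an affine modification for which the boundary energy flux vanishes; since every weak solution of (\ref{genchap},~\ref{pgd_ri}) carries the states $(\rho_0,u_0)$ and $(\rho_1,u_1)$ near $x=\pm L$, this boundary term is common to all competitors, and---Riemann solutions being self-similar, so that $\mathcal D$ is independent of $t$---the energy admissibility condition amounts to minimizing the local energy production $\mathcal D$ over the weak solutions of the Riemann problem. There are three regimes. If $(\rho_1,u_1)$ is on or below $\Gamma_{ss}$, the shadow wave is the unique weak solution by \cite{MN_SR2017}, hence trivially the minimizer. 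If $(\rho_1,u_1)$ is above $\Gamma_{ss}$ and outside the region $\rho_0\rho_1(u_0-u_1)^2>(\rho_0-\rho_1)(\rho_1^{-\alpha}-\rho_0^{-\alpha})$ in which a shadow wave exists at all, the classical elementary-wave solution is the only candidate and it dissipates energy (rarefactions contribute $\mathcal D=0$, Lax shocks $\mathcal D\le 0$), so it is admissible. The only case that takes work is the overlap---$(\rho_1,u_1)$ above $\Gamma_{ss}$ with the above inequality holding---where both the two-shock combination $S_1+S_2$ and the shadow wave are weak solutions; I must show $\mathcal D^{cl}\le\mathcal D^{sdw}$ there.

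In the overlap I compute $\mathcal D^{sdw}-\mathcal D^{cl}$ directly. The flux terms cancel identically, leaving $(c_2-c)\eta(\rho_1,u_1)+(c-c_1)\eta(\rho_0,u_0)-(c_2-c_1)\eta(\rho_m,u_m)+\tfrac12 c^2\kappa_1$. Writing $a:=\sqrt{\rho_m-\rho_0}$, $b:=\sqrt{\rho_m-\rho_1}$ and $A_0:=A(\rho_0,\rho_m)$, $A_1:=A(\rho_1,\rho_m)$, the relations (\ref{rel_c}) and (\ref{u_kappa}) simplify to $c_2-c=\tfrac{a}{a+b}(c_2-c_1)$, $c-c_1=\tfrac{b}{a+b}(c_2-c_1)$, $c=\tfrac{ac_1+bc_2}{a+b}$ and $\kappa_1=ab(c_2-c_1)$, so that
\[
\mathcal D^{sdw}-\mathcal D^{cl}=\frac{c_2-c_1}{a+b}\Big[a\,\eta(\rho_1,u_1)+b\,\eta(\rho_0,u_0)-(a+b)\,\eta(\rho_m,u_m)+\tfrac12 c^2 ab(a+b)\Big].
\]
As $c_1<c_2$ and $a,b>0$, the sign is that of the bracket. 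With $\eta=\tfrac12\rho u^2+\tfrac1{1+\alpha}\rho^{-\alpha}$, the internal-energy part $\tfrac1{1+\alpha}\big(a\rho_1^{-\alpha}+b\rho_0^{-\alpha}-(a+b)\rho_m^{-\alpha}\big)$ is positive because $\rho_m>\max(\rho_0,\rho_1)$ and $r\mapsto r^{-\alpha}$ is decreasing, so it remains to prove the kinetic inequality $a\rho_1u_1^2+b\rho_0u_0^2+c^2ab(a+b)\ge(a+b)\rho_mu_m^2$.

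For this I use the Rankine--Hugoniot conditions across $S_1$ and $S_2$, which give $\rho_mu_m^2=\rho_0u_0^2+c_1^2a^2-(\rho_0^{-\alpha}-\rho_m^{-\alpha})=\rho_1u_1^2+c_2^2b^2-(\rho_1^{-\alpha}-\rho_m^{-\alpha})$; multiplying the first identity by $b$, the second by $a$ and adding yields $(a+b)\rho_mu_m^2=b\rho_0u_0^2+a\rho_1u_1^2+ab(ac_1^2+bc_2^2)-B$ with $B:=b(\rho_0^{-\alpha}-\rho_m^{-\alpha})+a(\rho_1^{-\alpha}-\rho_m^{-\alpha})>0$. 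Hence the kinetic inequality is equivalent to $c^2ab(a+b)+B\ge ab(ac_1^2+bc_2^2)$, and since $c=\tfrac{ac_1+bc_2}{a+b}$ the identity $(ac_1^2+bc_2^2)-c^2(a+b)=\tfrac{ab(c_2-c_1)^2}{a+b}$ turns this into $B\ge\tfrac{a^2b^2(c_2-c_1)^2}{a+b}$. Finally, (\ref{A_0m}) gives $B=\tfrac{ab}{\rho_m}\big(a\rho_0A_0^2+b\rho_1A_1^2\big)$, while $c_1=u_0-A_0$, $c_2=u_1+A_1$ and (\ref{u_kappa}) give $c_2-c_1=\tfrac1{\rho_m}\big(\rho_0A_0+\rho_1A_1\big)$, so everything reduces to
\[
(a+b)\rho_m\big(a\rho_0A_0^2+b\rho_1A_1^2\big)\ge ab\big(\rho_0A_0+\rho_1A_1\big)^2 ,
\]
which follows from the Cauchy--Schwarz inequality $ab\big(\rho_0A_0+\rho_1A_1\big)^2\le(a+b)\big(a\rho_0^2A_0^2+b\rho_1^2A_1^2\big)$ together with $\rho_0,\rho_1\le\rho_m$. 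Thus $\mathcal D^{sdw}\ge\mathcal D^{cl}$, strictly away from the degenerate boundary of the overlap, so the classical $S_1+S_2$ solution is the one of maximal energy dissipation; together with the two trivial regimes this proves that the designated solution satisfies the energy admissibility condition throughout.

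The one genuinely delicate point is the kinetic inequality. The kinetic energies alone go the wrong way: the identity $(ac_1^2+bc_2^2)-c^2(a+b)=\tfrac{ab(c_2-c_1)^2}{a+b}\ge 0$ says that the concentrated term $\tfrac12 c^2ab(a+b)$ cannot by itself offset $(a+b)\rho_mu_m^2$, and the shortfall is paid for exactly by the internal-energy jump $B$ carried by the two shocks---the feature absent in the pressureless model. The substance of the proof is recognizing that $B$ and $(c_2-c_1)^2$ assemble into a Cauchy--Schwarz comparison; the substitution $a=\sqrt{\rho_m-\rho_0}$, $b=\sqrt{\rho_m-\rho_1}$, under which the weights in (\ref{rel_c}) become $\tfrac{a}{a+b},\tfrac{b}{a+b}$ and $c$ becomes the corresponding barycenter of $c_1$ and $c_2$, is what keeps this out of the special-function estimates of \cite{MN_SR2017}.
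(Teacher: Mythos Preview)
Your proof is correct, and the overall structure---split $\mathcal{D}^{sdw}-\mathcal{D}^{cl}$ into an internal-energy piece and a kinetic piece---matches the paper's decomposition $\tfrac{1}{1+\alpha}I_1+\tfrac12 I_2$. Where you diverge is in the treatment of the kinetic inequality $I_2\ge 0$. The paper eliminates $u_0$ and $u_m$ via (\ref{intermediate}), arrives at an explicit expression in $\rho_0,\rho_1,\rho_m$, and verifies term by term that each piece has the right sign. Your route---the substitution $a=\sqrt{\rho_m-\rho_0}$, $b=\sqrt{\rho_m-\rho_1}$ (under which $c$ becomes the barycenter $(ac_1+bc_2)/(a+b)$), then the Rankine--Hugoniot identities to rewrite $(a+b)\rho_mu_m^2$, then a single Cauchy--Schwarz estimate capped by $\rho_0,\rho_1<\rho_m$---is shorter, more conceptual, and handles the case $\rho_0=\rho_1$ uniformly rather than separately. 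Since $\rho_0,\rho_1<\rho_m$ strictly in the overlap, the second step of your estimate is strict, so you in fact recover $\mathcal{D}^{sdw}>\mathcal{D}^{cl}$ as in the paper. One small point of exposition: the kinetic inequality $I_2>0$ does hold by itself (as your argument establishes); your remark that ``the kinetic energies alone go the wrong way'' refers only to the intermediate comparison $c^2(a+b)$ versus $ac_1^2+bc_2^2$ in your particular rewriting, not to $I_2$.
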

\begin{proof}
If $(\rho_{1},u_{1})$ lies in the area where only one solution exists, 
there is nothing to prove. Suppose that there exist two solutions for 
$(\rho_{1},u_{1})$ above the curve $\Gamma_{ss}(\rho_{0},u_{0})$,
a classical two shock solution and a shadow wave.
To prove the theorem it thus suffices to show that $\mathcal{D}^{sdw}-\mathcal{D}^{cl}>0$
in that area. We have
\[
\mathcal{D}^{sdw}-\mathcal{D}^{cl}=\eta(\rho_{m},u_{m})(c_{1}-c_{2})
+\eta(\rho_{0},u_{0})(c-c_{1})+\eta(\rho_{1},u_{1})(c_{2}-c)
+\frac{1}{2}c^2\kappa_1.\]
Suppose that $\rho_{0}\neq \rho_{1}$. Using (\ref{rel_c}), we get
\[
\begin{split}
\frac{\mathcal{D}^{sdw}-\mathcal{D}^{cl}}{c_{2}-c_{1}}
=& -\eta(\rho_{m},u_{m})+\frac{\rho_m-\rho_1}{\rho_1-\rho_0}
\Big(-1+\sqrt{\frac{\rho_m-\rho_0}{\rho_m-\rho_1}}\Big)
\eta(\rho_{0},u_{0})\\
&+ \frac{\rho_m-\rho_0}{\rho_1-\rho_0}
\Big(1-\sqrt{\frac{\rho_m-\rho_1}{\rho_m-\rho_0}}\Big)\eta(\rho_{1},u_{1})
+ \frac{1}{2}c^2\sqrt{(\rho_m-\rho_0)(\rho_m-\rho_1)}\\
= & \frac{1}{1+\alpha}I_1+\frac{1}{2}I_2,
\end{split}
\]
where
\[
\begin{split}
I_1= &\rho_{1}^{-\alpha}-\rho_{m}^{-\alpha}-\frac{\rho_m-\rho_1}{\rho_1-\rho_0}
\Big(-1+\sqrt{\frac{\rho_m-\rho_0}{\rho_m-\rho_1}}\Big)
(\rho_{1}^{-\alpha}-\rho_{0}^{-\alpha}),\\
I_2= & -\rho_{m}u_{m}^2+\frac{\rho_m-\rho_1}{\rho_1-\rho_0}
\Big(-1+\sqrt{\frac{\rho_m-\rho_0}{\rho_m-\rho_1}}\Big)\rho_{0}u_{0}^2\\
&+ \frac{\rho_m-\rho_0}{\rho_1-\rho_0}
\Big(1-\sqrt{\frac{\rho_m-\rho_1}{\rho_m-\rho_0}}\Big)
\rho_{1}u_{1}^{2}+c^2\sqrt{(\rho_m-\rho_0)(\rho_m-\rho_1)}.
\end{split}
\]
Let us prove that $I_1>0$.
\[
\begin{split}
I_1=& (\rho_{1}^{-\alpha}-\rho_{m}^{-\alpha})
\frac{\rho_{m}-\rho_{0}}{\rho_{1}-\rho_{0}}
-(\rho_{0}^{-\alpha}-\rho_{m}^{-\alpha})\frac{\rho_{m}-\rho_{1}}{\rho_{1}
-\rho_{0}}\\
&+(\rho_{0}^{-\alpha}-\rho_{1}^{-\alpha})
\frac{\rho_{m}-\rho_{0}}{\rho_{1}-\rho_{0}}
\sqrt{\frac{\rho_{m}-\rho_{1}}{\rho_{m}-\rho_{0}}}\\
=& \frac{\rho_{m}-\rho_{0}}{\rho_{1}-\rho_{0}}(\rho_{m}-\rho_{1})
\bigg(\frac{\rho_{1}^{-\alpha}-\rho_{m}^{-\alpha}}{\rho_{m}-\rho_{1}}
\Big(1-\sqrt{\frac{\rho_{m}-\rho_{1}}{\rho_{m}-\rho_{0}}}\Big)\\
&+\frac{\rho_{0}^{-\alpha}-\rho_{m}^{-\alpha}}{\rho_{m}-\rho_{0}}
\Big(-1+\sqrt{\frac{\rho_{m}-\rho_{0}}{\rho_{m}-\rho_{1}}}\Big)\bigg).
\end{split}
\]
One can easily see that
\[a_{1}:=1-\sqrt{\frac{\rho_{m}-\rho_{1}}{\rho_{m}-\rho_{0}}}>0
\text{ and }
a_{2}:=-1+\sqrt{\frac{\rho_{m}-\rho_{0}}{\rho_{m}-\rho_{1}}}> 0\]
if $\rho_{1}>\rho_{0}$. Both $a_{1}$ and $a_{2}$ are negative if
$\rho_{1}<\rho_{0}$ and $I_{1}>0$ because
$\rho_{m}>\rho_{0}$ and $\rho_{m}>\rho_{1}$.

Eliminating 
$u_{0}$ and $u_{m}$ from $I_2$, we get
\[
\begin{split}
I_{2}= &\frac{\rho_{m}-\rho_{0}}{(\rho_{1}-\rho_{0})^2}
\frac{\rho_{m}-\rho_{1}}{\rho_{m}}\bigg(\frac{1}{2}
\big(\rho_{1}a_{2}-\rho_{0}a_{1}\big)
\Big(\sqrt{\rho_{0}^{-\alpha}-\rho_{m}^{-\alpha}}
-\sqrt{\rho_{1}^{-\alpha}-\rho_{m}^{-\alpha}}\Big)^2\\
&+ \sqrt{\big(\rho_{0}^{-\alpha}-\rho_{m}^{-\alpha}\big)
\big(\rho_{1}^{-\alpha}-\rho_{m}^{-\alpha}\big)}
\Big(\big(\rho_{1}a_{2}-\rho_{0}a_{1}\big)\\
&-\sqrt{\rho_{0}\rho_{1}}\frac{\big(\sqrt{\rho_{m}-\rho_{1}}
-\sqrt{\rho_{m}-\rho_{0}}\big)^2}{\sqrt{(\rho_{m}-\rho_{1})
(\rho_{m}-\rho_{0})}}\Big)\bigg).
\end{split}
\]
Then
\[\rho_{1}a_{2}-\rho_{0}a_{1}=\Big(1-\underbrace{\sqrt{\frac{\rho_{m}-\rho_{0}}
{\rho_{m}-\rho_{1}}}}_{=:a}\Big)\Big(\rho_{0}
\sqrt{\frac{\rho_{m}-\rho_{1}}{\rho_{m}-\rho_{0}}}-\rho_1\Big)>0.\]
The above inequality follows from the fact that $a>1$ for $\rho_{1}>\rho_{0}$.
That implies $(1-a)<0$ and $\frac{\rho_{0}}{a}-\rho_1<0$. 
The same holds for $\rho_{1}<\rho_{0}.$
Finally, 
\[\begin{split}
&\big(\rho_{1}a_{2}-\rho_{0}a_{1}\big)-\sqrt{\rho_{0}\rho_{1}}
\frac{\big(\sqrt{\rho_{m}-\rho_{1}}-\sqrt{\rho_{m}-\rho_{0}}\big)^2}
{\sqrt{(\rho_{m}-\rho_{1})(\rho_{m}-\rho_{0})}}\\
=&-\big(\sqrt{\rho_{0}}-\sqrt{\rho_{1}}\big)^{2}
+\big(\sqrt{\rho_{0}}-\sqrt{\rho_{1}}\big)\Big(\sqrt{\rho_{0}}
\frac{1}{a}-\sqrt{\rho_{1}}a\Big)>0
\end{split}\]
since
\[\frac{\sqrt{\rho_{0}}\frac{1}{a}-\sqrt{\rho_{1}}a}{\sqrt{\rho_{0}}
-\sqrt{\rho_{1}}}>1.\]
Thus, $I_2>0$ and $\mathcal{D}^{sdw}-\mathcal{D}^{cl}>0$.\\
The case $\rho_0=\rho_1$ is simpler,
\[\mathcal{D}^{sdw}-\mathcal{D}^{cl}
=(c_2-c_1)(\rho_{0}^{-\alpha}-\rho_{m}^{-\alpha})
\Big(\frac{1}{2}\frac{\rho_{m}-\rho_{0}}{\rho_{m}}
+\frac{1}{1+\alpha}\Big)>0.\]
\end{proof}

\begin{remark}
Note that this result coincides with the one from \cite{MNtam} obtained 
using the so-called vanishing pressure method.
\end{remark}

\section{Backward energy condition}

It is known that in the case of hyperbolic systems singularities
naturally develop even if initial data are smooth. 
Now, suppose that we have singular initial data. 
The idea behind the 
new admissibility criteria is to single out a maximally regular 
weak solution with such data. We believe that such a solution 
is the best approximation of a real-world process. Let us give a 
simple illustration of the idea. 
Consider the inviscid Burgers' equation with arbitrary initial data 
and suppose that a solution to such problem is a step function at fixed time 
$t=T$, meaning that it is given in the form of shock wave at least for $t>T$. 
In general, that happens even if the initial data is smooth. The value $T$ 
may be the point where a classical smooth solution 
if it exists, breaks down, i.e.\ where its gradient explodes. 
Starting from that moment, we have to use another 
more robust solution definition and apply new methods 
mostly based on some kind of approximation that will give 
a complete solution to the initial data problem. 
If the initial data is discontinuous, 
then the use of such methods is inevitable. In order to avoid the 
possibility of choosing some other weak over classical smooth solution
as long as it solution exists, the backward energy condition
has to be defined in such a way that a smooth solution is favoured. 
The philosophy behind that idea is that smooth solutions are the most natural 
ones. By using that kind of reasoning, we will choose 
the approximation of the initial that gives a unique weak solution 
(distributional limit) with a minimal energy dissipation.

The main example will be the system of gas dynamics
\begin{equation}\label{s}
\begin{split}
\partial_t \rho + \partial_x(\rho u) & = 0\\
\partial_t (\rho u) + \partial_x(\rho u^2+p(\rho)) & = 0,
\end{split}
\end{equation}
with pressure functions $p(\rho) \leq 0$ satisfying 
$p'(\rho)\geq 0$ and $\rho p''(\rho)+2p'(\rho)\geq 0$.
Due to the lack of a positive pressure we expect to have
solutions with mass being infinite. 
Thus, take the measure (or distributional) initial data
\begin{equation}\label{pgd_id}
(\rho, u)(x,0)=\begin{cases}
(\rho_0,u_0), & x<0\\
(\rho_1,u_1), & x>0
\end{cases}+ (\xi_\delta,0)\delta,
\end{equation}
where $\rho_0,\rho_1>0$, $\xi_\delta>0$.

Like in the first part, we analyse three different types of pressure:
\begin{itemize}
\item 
$p(\rho)\equiv 0$ (pressureless gas dynamics), 
\item 
$p(\rho)=-\rho^{-1}$ (Chaplygin model) and
\item
$p(\rho)=-\rho^{-\alpha}$, $\alpha\in (0,1)$ (generalized Chaplygin model). 
\end{itemize}

In all cases, there exists a unique solution to Riemann problem
being the shadow wave or a combination of elementary waves. 

We will use wave tracking approach from \cite{mn_sr2019}
to solve initial data problem (\ref{s}, \ref{pgd_id}).
The initial data are approximated by the piecewise constant function
\begin{equation}\label{id_approx}
U_{\mu}(x,0):=(\rho_{\mu},u_{\mu})(x,0)=\begin{cases}
(\rho_0,u_0), & x<-\frac{\mu}{2}\\
(\frac{\xi_\delta}{\mu},u_\delta), & -\frac{\mu}{2}<x<\frac{\mu}{2}\\
(\rho_1,u_1), & x>\frac{\mu}{2}
\end{cases}
\end{equation}
depending on the parameter $\mu \gg \varepsilon$. 
The parameter 
$1 \gg \varepsilon>0$ is used in a construction of shadow waves (\ref{sdw})
used for solving (\ref{s}, \ref{id_approx}) together with 
the other elementary waves. 
The parameter $\mu$ tends to zero but significantly slower than $\varepsilon$. 
A value $u_\delta$ is artificially added into (\ref{id_approx})
and it does not have any influence on a distributional value of the
initial data. A way to single out a proper solution is to 
find a value of $u_{\delta}$ that eliminates all unphysical
solutions. A distributional physically meaningful
solution to the problem (\ref{s}, \ref{pgd_id}) is then obtained by letting
$\mu\to 0$ (that makes $\varepsilon \to 0$ even faster).
We are looking for initial data 
that gives a minimal energy dissipation (or equivalently, a maximal 
local energy production) of a unique solution 
corresponding to initial data (\ref{id_approx}).

\begin{definition}[Backward energy condition] \label{bec}
A solution $U$ satisfies the backward energy condition if it is an admissible
solution to (\ref{s}, \ref{pgd_id}) for some $u_\delta$ such that
\begin{equation} \label{a}
\frac{d}{dt}H_{[-L,L]}(\tilde{U}(\cdot,0+))\geq 
\frac{d}{dt}H_{[-L,L]}(U(\cdot,0+))\, \text{ for } \mu \text{ small enough},
\end{equation}
where $\tilde{U}$ is an admissible solution corresponding to some other
initial value $\tilde{u}_{\delta}$.
If there is more than one solution that satisfies the above condition,
we choose the one that corresponds to initial data 
with a minimal total initial energy. 
Here, $U$ and $\tilde{U}$ are approximated solutions depending on 
$\varepsilon\ll \mu$.
\end{definition}

\begin{remark}
Note that a choice of an energy pair $\tilde{\eta}$ and $\tilde{Q}$ from 
Lemma \ref{lemma:affine} has a direct impact on the 
second criterion in Definition \ref{bec}. The first one, (\ref{a}),
is independent of the choice.
\end{remark}

The general algorithm goes as follows.
Assume that a small parameter $\mu>0$ and a value of
$u_\delta$ from (\ref{id_approx}) are given.
Initially, an approximate solution is a solution to double Riemann
problem (\ref{id_approx}). Those waves are uniquely determined by the
relationship between $u_0$, $u_\delta$ and $u_1$ as we saw in
the first part. The first wave or wave combination 
emanates from $\big(-\frac{\mu}{2},0\big)$
and connects the state $U_{0}=(\rho_0,u_0)$ with
$U_{\delta}=\big(\frac{\xi_\delta}{\mu},u_\delta\big)$, 
while the second one emanates from $\big(\frac{\mu}{2},0\big)$
and connects the state $U_{\delta}$ with $U_{1}=(\rho_1,u_1)$.
Then, continue by following further wave interactions. 
The procedure resembles the method of higher order shadow waves 
introduced in \cite{mn2014}. 

Now, we shall apply the backward energy condition to three 
different systems of the form (\ref{s}) described above.

\subsection{Pressureless gas dynamics system} 

\subsubsection{Weak solution and the backward energy condition} 
Recall, the energy--energy flux pair for the system is
$\eta=\frac{1}{2}\rho u^2$ and $Q=\eta u$.
The local energy production of a shadow wave at time $t$ is 
\[\begin{split}
\mathcal{D}(t)&=-u_s(t)[\eta]+[Q]+\lim_{\varepsilon \to
0}\frac{d}{dt}\Big(2\big(\tfrac{\varepsilon}{2} t+x_\varepsilon\big)
\eta\big(U_\varepsilon(t)\big)\Big)\\
&= -\frac{1}{2}\big(\rho_0(u_0-u_s(t))^3+\rho_1(u_s(t)-u_1)^3\big).
\end{split}\]
It is negative for a shadow wave and zero for a contact discontinuity.
These facts will be used to choose a proper initial data.

\noindent
{\it Case $A_1$} ($u_\delta<u_0<u_1$). The solution is 
given by the shadow wave connecting $U_{0}$ and $U_{\delta}$ 
and the CD wave combination connecting $U_{\delta}$ and $U_{1}$.
The local energy production of the solution fully comes from the 
shadow wave,
\[\begin{split}
\mathcal{D}& =-\frac{1}{2}\rho_0(u_0-y_{0,\delta})^3
-\frac{1}{2}\frac{\xi_\delta}{\mu}
(y_{0,\delta}-u_\delta)^3
 =-\frac{1}{2}\frac{\rho_0\xi_{\delta}/\mu}{(\sqrt{\xi_
{\delta}/\mu}+\sqrt{\rho_0})^2}(u_0-u_\delta)^3<0
\end{split}\]
for $t$ small enough.
It is clear that $\mathcal{D}<0$ and increases to zero as $u_{\delta}\to
u_{0}$. 

\noindent
{\it Case $A_2$} ($u_0\leq u_\delta \leq u_1$). In this case, the energy is
conserved and we have $\mathcal{D}=0$
for each $t\geq 0$ and each $u_\delta\in [u_0,u_1].$
The total initial energy has a minimum at
\begin{equation} \label{iemin}
u_{\delta}= \begin{cases} 0, & \text{ if } \mathop{\rm sign}(u_{0} u_{1})<0 \\ 
\min\{u_{0}, u_{1}\}, & \text{ if } \mathop{\rm sign}(u_{0}), 
\mathop{\rm sign}(u_{1})\geq 0 \\
\max\{u_{0}, u_{1}\}, & \text{ if } \mathop{\rm sign}(u_{0}), 
\mathop{\rm sign}(u_{1})\leq 0. \end{cases} 
\end{equation}

\noindent
{\it Case $A_3$} ($u_0<u_1<u_\delta$).
We have
$\mathcal{D}<0$ and $\mathcal{D}\to 0$ as $u_\delta\to u_1$ since
$\mathcal{D}=-\frac{1}{2}\frac{\rho_{1}\xi_{\delta}/\mu(u_{\delta}-u_{1})^3}
{(\sqrt{\xi_{\delta}/\mu}+\sqrt{\rho_{1}})^2}$ for $t$ small enough
(like in Case $A_1$).
\medskip

\noindent
{\it Case $B_1$} ($u_\delta<u_1<u_0$). 
Now,
\[\mathcal{D}=-\frac{1}{2}\rho_0\frac{\xi_{\delta}/\mu}{(\sqrt{\xi_
{\delta}/\mu}+\sqrt{\rho_0})^2}(u_0-u_\delta)^3
<-\frac{1}{2}\rho_{0}(u_0-u_{1})^3\]
for $t$ small enough.

\noindent
{\it Case $B_{2}$} ($u_1\leq u_\delta \leq u_0$ with at least one inequality 
being strict). In this case
\[
\begin{split}
\mathcal{D}&= -\frac{1}{2}\frac{\rho_0\xi_{\delta}/\mu}
{(\sqrt{\xi_{\delta}/\mu}+\sqrt{\rho_0})^2}(u_0-u_\delta)^3
-\frac{1}{2}\frac{\rho_{1}\xi_{\delta}/\mu}{(\sqrt{\xi_{\delta}/\mu}
+\sqrt{\rho_{1}})^2}(u_{\delta}-u_{1})^3
 \end{split}
\]
for $t$ small enough. $\mathcal{D}$ is a cubic function in $u_\delta$ having a
maximum in 
\[
y^{\mu}=\frac{u_0\sqrt{r_{0}^{\mu}}+u_1\sqrt{r_{1}^{\mu}}}
{\sqrt{r_{0}^{\mu}}+\sqrt{r_{1}^{\mu}}},\;\text{where } r_{0}^{\mu}
=\frac{\rho_{0}\xi_{\delta}/\mu}{(\sqrt{\xi_{\delta}/\mu}+\sqrt{\rho_{0}})^2}
\text{ and } r_{1}^{\mu}=\frac{\rho_{1}\xi_{\delta}/\mu}
{(\sqrt{\xi_{\delta}/\mu}+\sqrt{\rho_{1}})^2} 
\]
regardless of a relation between $\rho_{0}$ and $\rho_{1}$.
Since $y^{\mu}\to y:=\frac{\sqrt{\rho_0}u_0+\sqrt{\rho_1}u_1}
{\sqrt{\rho_0}+\sqrt{\rho_1}}$ as $\mu \to 0$, we have
\begin{equation}\label{Dmax}
\lim_{\mu\to 0}\sup\limits_{u_\delta\in(u_1,u_0)} 
\mathcal{D}=D_{\max}:=-\frac{1}{2}\frac{\rho_0
\rho_1(u_0-u_1)^3}{(\sqrt{\rho_0}+\sqrt{\rho_1})^2}.
\end{equation}

\noindent
{\it Case $B_3$} ($u_1< u_0< u_\delta$). Like in Case $B_1$, we have
\[\mathcal{D}=-\frac{1}{2}\rho_{1}\frac{\xi_{\delta}/\mu(u_{\delta}
-u_{1})^3}{(\sqrt{\xi_{\delta}/\mu}+\sqrt{\rho_{1}})^2}
<\frac{1}{2}\rho_{1}(u_0-u_{1})^3\]
for $t$ small enough.
\medskip

Obviously, if $u_0>u_1$, then 
\[
\frac{\rho_{0}\rho_{1}}{(\sqrt{\rho_{0}}
+\sqrt{\rho_{1}})^2}<\min\{\rho_{0},\rho_{1}\}
\text{ and } D_{\max}>-\frac{1}{2}\min\{\rho_{0},\rho_{1}\}(u_{0}-u_{1})^3.
\]
That is, $u_\delta=y^\mu$ is the value 
in (\ref{id_approx}) for which the corresponding 
solution satisfies the backward energy condition. 

\subsubsection{Limit of an energy admissible solution}
For (\ref{pgd}) we are able to easily obtain complete solution for $t>0$ 
using procedures from \cite{mn_sr2019}.
Thus, there exists a distributional limit of the constructed 
approximate solution satisfying the backward energy condition as $\mu \to 0$.
It will be taken to be the physically meaningful distributional solution 
to system (\ref{pgd}) with the distributional initial data (\ref{pgd_id}).
The energy dissipation analysis described in the details below
will show very interesting facts.
 
\begin{lemma}[Lemma 3.1.\ from \cite{mn_sr2019}] 
Let
\begin{equation*}
(\rho, u)(x,0)=\begin{cases}
(\rho_1,u_1), & x<0\\
(\rho_1,u_1), & x>0
\end{cases}+ (\xi_\delta,0)\delta,\; (\rho u)(x,0)=u_\delta \xi_\delta
\delta,
\end{equation*}
be the initial data for (\ref{pgd}). If
$u_0\geq u_\delta\geq u_1$, $\xi_\delta\geq 0$ and $\rho_0,\rho_1\geq 0$,
then the solution is the overcompressive shadow wave 
with strength $\xi(t)$ and speed $u_s(t)$ given by 
\begin{equation}\label{sol_xi}
\begin{split}
\xi(t) &=
\sqrt{\xi_\delta^2+\rho_0\rho_{1}[u]^2t^2+2\xi_\delta (u_\delta[\rho]-[\rho
u])t},\; \xi(0)=\xi_{\delta}\\
u_s(t) &=
\begin{cases}
\frac{1}{[\rho]}\Big([\rho u]+\frac{\rho_0\rho_{1}[u]^2t+\xi_\delta(u_\delta
[\rho]-[\rho u])}{\xi(t)}\Big), & \text{ if } \rho_0\neq \rho_1\\
\frac{\xi_\delta^2}{\xi^2(t)}(u_\delta-\frac{u_0+u_1}{2})+\frac{u_0+u_1}{2},
& \text{ if } \rho_0= \rho_1,
\end{cases} \; u_{s}(0)=u_{\delta}.
\end{split}
\end{equation}
The front of the resulting shadow wave
is $x=c(t)=\int_0^t u_s(\tau)\,d\tau$.
\end{lemma}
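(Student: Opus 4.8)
The plan is to realize the solution as a shadow wave of the form (\ref{sdw})--(\ref{wSDW_l,r}) with a single, time-dependent intermediate state $(\rho_\varepsilon(t),u_s(t))$ and a nonzero initial half-width $x_\varepsilon\to 0$ arranged so that $\lim_{\varepsilon\to 0}2x_\varepsilon\rho_\varepsilon(0)=\xi_\delta$, i.e.\ $\xi(0)=\xi_\delta$; then to reduce the conditions of \cite{mn2010} for (\ref{sdw}) to be an approximate solution to a small ODE system for the strength $\xi$, the speed $u_s$ and the front $c$; to solve that system explicitly; and finally to verify $\rho_\varepsilon\ge 0$ and overcompressivity, so that the resulting net is an admissible shadow wave and, in fact, the only admissible solution for these data.

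\textbf{Reduction to an ODE system.} I would set $d_\varepsilon(t)=\varepsilon t+2x_\varepsilon$ and $\rho_\varepsilon(t):=\xi(t)/d_\varepsilon(t)$, so that $d_\varepsilon(t)\rho_\varepsilon(t)=\xi(t)$ for every $\varepsilon$. With $U=(\rho,\rho u)$, $F(U)=(\rho u,\rho u^2)$, a direct computation shows that the conditions $\Lambda_1=\mathcal O(\varepsilon)$, $\Lambda_2=\mathcal O(\varepsilon)$ of \cite{mn2010} hold precisely when
\[
c'(t)=u_s(t),\qquad \xi'(t)=c'(t)[\rho]-[\rho u],\qquad \big(\xi(t)u_s(t)\big)'=c'(t)[\rho u]-[\rho u^2],
\]
where $[\,\cdot\,]=(\cdot)_1-(\cdot)_0$, together with the initial data $c(0)=0$, $\xi(0)=\xi_\delta$, $u_s(0)=u_\delta$. (The first relation annihilates $\Lambda_2$, the last two annihilate $\Lambda_1$; this is exactly the generalized Rankine--Hugoniot system for a delta shock of pressureless gas dynamics.)

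\textbf{Solving the system.} Assume first $\rho_0\neq\rho_1$. The second relation gives $u_s=\big([\rho u]+\xi'\big)/[\rho]$; inserting this, together with $c'=u_s$, into the third relation, all first-order terms cancel and there remains $\tfrac12(\xi^2)''=[\rho u]^2-[\rho][\rho u^2]$. The elementary identity $[\rho u]^2-[\rho][\rho u^2]=\rho_0\rho_1(u_0-u_1)^2$ turns this into the linear equation $(\xi^2)''=2\rho_0\rho_1[u]^2$. Integrating twice with $\xi(0)^2=\xi_\delta^2$ and $(\xi^2)'(0)=2\xi_\delta\xi'(0)=2\xi_\delta\big(u_\delta[\rho]-[\rho u]\big)$ (the value of $\xi'(0)$ coming from $u_s(0)=u_\delta$) produces the formula for $\xi(t)$ in (\ref{sol_xi}); then $u_s=\big([\rho u]+\xi'\big)/[\rho]$ yields the stated $u_s(t)$, and the front is $c(t)=\int_0^t u_s(\tau)\,d\tau$. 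If $\rho_0=\rho_1$, the second relation forces $\xi'\equiv-[\rho u]=\rho_0(u_0-u_1)$, so $\xi(t)=\xi_\delta+\rho_0(u_0-u_1)t$, in agreement with the square-root formula (whose radicand is then a perfect square); and the third relation reduces to $\big(u_s-\tfrac{u_0+u_1}{2}\big)'/\big(u_s-\tfrac{u_0+u_1}{2}\big)=-2\xi'/\xi$, whose solution with $u_s(0)=u_\delta$ is $u_s(t)-\tfrac{u_0+u_1}{2}=\tfrac{\xi_\delta^2}{\xi^2(t)}\big(u_\delta-\tfrac{u_0+u_1}{2}\big)$, as asserted. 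The step I expect to be the main obstacle is exactly this reduction to $(\xi^2)''=2\rho_0\rho_1[u]^2$: it rests on the exact cancellation of the first-order terms after $u_s$ is eliminated and on the identity $[\rho u]^2-[\rho][\rho u^2]=\rho_0\rho_1[u]^2$; everything after that is integration and sign checking.

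\textbf{Admissibility and uniqueness.} Under the hypothesis $u_0\ge u_\delta\ge u_1$, all three coefficients of the quadratic in $t$ under the square root are nonnegative: the leading one is $\rho_0\rho_1(u_0-u_1)^2$, the constant one is $\xi_\delta^2$, and the middle one equals $2\xi_\delta\big(u_\delta[\rho]-[\rho u]\big)=2\xi_\delta\big(\rho_0(u_0-u_\delta)+\rho_1(u_\delta-u_1)\big)\ge 0$. Hence $\xi(t)\ge 0$ for every $t\ge 0$, so $\rho_\varepsilon(t)\ge 0$. For overcompressivity (both characteristic speeds equal $u$) one needs $u_1\le u_s(t)\le u_0$: differentiating the explicit $u_s(t)$ shows that $u_s'(t)$ has a sign that does not depend on $t$, so $u_s$ is monotone and runs from $u_s(0)=u_\delta$ to $\lim_{t\to\infty}u_s(t)=\tfrac{\sqrt{\rho_0}\,u_0+\sqrt{\rho_1}\,u_1}{\sqrt{\rho_0}+\sqrt{\rho_1}}$; since both endpoints lie in $[u_1,u_0]$, so does $u_s(t)$, and $c'(t)=u_s(t)$ places the front between the left and right characteristic speeds. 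Finally, for overcompressive data $u_0\ge u_\delta\ge u_1$ the double Riemann problem has no elementary-wave resolution, while the ODE system above has a unique solution as long as $\xi>0$; hence this shadow wave is the unique admissible solution.
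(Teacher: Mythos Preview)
Your derivation is correct. The paper does not actually prove this lemma: it is quoted verbatim as ``Lemma 3.1.\ from \cite{mn_sr2019}'' and used as a black box, so there is no in-paper proof to compare against. What you wrote is precisely the standard shadow-wave computation one expects in \cite{mn_sr2019}: impose $\Lambda_1,\Lambda_2=\mathcal O(\varepsilon)$ from \cite{mn2010} to obtain the delta-shock Rankine--Hugoniot system $c'=u_s$, $\xi'=c'[\rho]-[\rho u]$, $(\xi u_s)'=c'[\rho u]-[\rho u^2]$, eliminate $u_s$ to get $(\xi^2)''=2\rho_0\rho_1[u]^2$ via the identity $[\rho u]^2-[\rho][\rho u^2]=\rho_0\rho_1[u]^2$, and integrate. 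Your overcompressibility check is also right; the computation $\xi''=\xi_\delta^2\big(\rho_0\rho_1[u]^2-(u_\delta[\rho]-[\rho u])^2\big)/\xi^3$ shows $u_s'=\xi''/[\rho]$ has constant sign, so $u_s$ is monotone between $u_\delta\in[u_1,u_0]$ and $y\in[u_1,u_0]$. The only places I would tighten the write-up are (i) the $\rho_0=\rho_1$ case deserves the same overcompressibility sentence (here $u_s$ moves monotonically from $u_\delta$ toward $(u_0+u_1)/2$ since $\xi$ is increasing), and (ii) the uniqueness remark at the end is really uniqueness within the shadow-wave ansatz, which is all the lemma asserts.
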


An interaction problem between two waves when at least one is a shadow one
can be interpreted as the initial value problem (\ref{pgd}, \ref{pgd_id}) 
with the initial data translated to the interaction point $(X,T)$, 
where \[\xi_\delta=\xi_l(T)+\xi_r(T),\;
u_\delta=\frac{u_{s,l}(T)\xi_l(T)+u_{s,r}(T)\xi_r(T)}{\xi_l(T)+\xi_r(T)}.\]
The values $\xi_i(T)$ and $u_{s,i}(T),$ $i=l,r$ denote strengths and speeds of
the left and right incoming wave at the interaction time $t=T$.
\medskip

For the readers convenience, we will present all possible solutions 
in the above cases. One can find some interesting things in their behaviour. 
\medskip 

\noindent
{\it Case} $A_1$.
The local solution consists of a shadow wave emanating from $x=-\mu/2$
and CD wave combination emanating from
$x=\mu/2$. The shadow wave is supported by $x=-\frac{\mu}{2}+y_{0}t$
with the speed 
$y_{0}:=\frac{u_0\sqrt{\rho_0}+u_\delta\sqrt{\frac{\xi_\delta}{\mu}}}
{\sqrt{\rho_0}+\sqrt{\frac{\xi_\delta}{\mu}}}$.
The speed $y_{0}$ is greater than the slope $u_\delta$ of the first
contact discontinuity, so two waves will interact at the time
$t=T_1=\frac{\mu}{y_{0}+\frac{\varepsilon}{2}-u_\delta}\sim \sqrt{\mu}.$
Note that we have used that the external shadow wave line given by
$x=-\frac{\mu}{2}+y_{0}t+\frac{\varepsilon}{2}t$ first intersects
the line $x=\frac{\mu}{2}+u_\delta t$, 
but the term $\frac{\varepsilon}{2}t$ can be neglected 
since $\varepsilon\ll \mu$ and $\rho_\varepsilon(t)\sim \varepsilon^{-1}$ 
(Lemma 3.2.\ from \cite{mn_sr2019}). 
The resulting shadow wave connects $U_{0}$ and vacuum state and propagates 
with the strength
$\xi(t) = \sqrt{\xi_{0}^2T_1^2+2\rho_0\xi_{0}T_1(u_0-y_{0})(t-T_1)}$,
where
$\xi_{0}:=\sqrt{\rho_0 \frac{\xi_\delta}{\mu}}(u_0-u_\delta)$,
and the speed $u_s(t)= u_0-\frac{\xi_{0}}{\xi(t)}(u_0-y_{0})$.
Both functions are increasing, $\xi(t)$ is non-negative and $u_s(t)\to u_0<u_1$
as $t\to \infty$. Thus, the resulting shadow wave will not interact with second
contact discontinuity. One can see that 
$y_{0}\to u_\delta$ and $\xi_{0}T_{1}\to \xi_{\delta}$ as $\mu \to 0$
by using the above expressions.
Straightforward calculation gives that 
\begin{equation}\label{delta_shockA}
(\rho,u)(x,t)=\begin{cases}
(\rho_0,u_0), & x<c(t)\\
\big(0,\frac{x}{t}\big), & c(t)<x<u_1 t\\
(\rho_1,u_1), & x>u_1 t
\end{cases}\;+ (\xi(t),0)\delta(x-c(t)) , 
\end{equation}
as the distributional limit of the approximate solution 
as $\mu \to 0$. The functions 
$\xi(t)$ and $u_s(t)$ are given in $(\ref{sol_xi})$, while
$c(t)=\int_{0}^t u_s(s)ds$.
\medskip

\noindent
{\it Case} $A_2$.
The solution consists of two
CD wave combinations, one emanating from $x=-\mu/2$ and
the other one from $x=\mu/2$. 
The approximate solution is given by
\begin{equation}\label{sol:a2}
U^\mu(x,t)=\begin{cases}
(\rho_0,u_0), & x<-\frac{\mu}{2}+u_0t\\
\big(0,\frac{x}{t}\big), & -\frac{\mu}{2}+u_0t<x<-\frac{\mu}{2}+u_\delta
t\\
\big(\frac{\xi_\delta}{\mu},u_\delta\big), & -\frac{\mu}{2}+u_\delta
t<x<\frac{\mu}{2}+u_\delta t\\
\big(0,\frac{x}{t}\big), & \frac{\mu}{2}+u_\delta t<x<\frac{\mu}{2}+u_1 t\\
(\rho_1,u_1), & x>\frac{\mu}{2}+u_1 t.
\end{cases}
\end{equation}
If $u_0<u_\delta<u_1,$ the distributional limit of (\ref{sol:a2}) is a
combination of two contact discontinuities and a delta shock supported by
line $x=u_\delta t$.
If $u_{\delta}$ coincides with  $u_{0}$ or $u_{1}$, then there is 
one instead of two contact discontinuities and the shadow wave has a constant
strength and characteristics speed. 
It is called the delta contact discontinuity (see \cite{mn_o2008}).
If $u_0=u_\delta=u_1$, the solution is a single delta contact
discontinuity.
\medskip

\noindent
{\it Case} $A_3$. 
The local solution consists of the CD wave combination 
emanating from $x=-\mu/2$ and the shadow wave from $x=\mu/2$. 
The second contact discontinuity in the combination (of the slope
$u_\delta$) interacts with the shadow wave
having the speed 
$y_{1}:=\frac{u_\delta\sqrt{\frac{\xi_\delta}{\mu}}
+u_1\sqrt{\rho_1}}{\sqrt{\frac{\xi_\delta}{\mu}}+\sqrt{\rho_1}}$.
The resulting shadow wave propagates with a speed $u_s(t)$ that increases
over time and satisfies $u_s(t)\to u_1>u_0$, $t\to \infty$.
That means that the left contact discontinuity will not
overtake the resulting shadow wave. The distributional limit of approximate
solution is a combination of contact discontinuity connecting $U_0$
and the vacuum state, and a weighted delta shock connecting 
the vacuum state to $U_1$, similarly to (\ref{delta_shockA}).
\medskip

\noindent
{\it Case} $B_1$.
Like in Case $A_1$, the approximate solution initially consists of 
the shadow wave emanating from $x=-\mu/2$ and the CD wave combination from 
$x=\mu/2$. The shadow wave interacts with the first contact discontinuity
at $t=T_1$ to form new shadow wave with speed $u_s(t)\to u_0$, $t \to \infty$.
Since $u_0>u_1$, at $t=T_2$ the shadow wave will overtake the
second contact discontinuity whose slope is $u_1$. The resulting one connects
$U_0$ to $U_1$. Its speed is increasing, so
$T_2<\frac{\mu}{y_{0}-u_1}\sim \sqrt{\mu}\to 0$ as $\mu\to 0.$ 
Thus, both interactions occur in the time $t\sim \sqrt{\mu}$ and the
distributional limit of approximate solution is
\begin{equation}\label{delta_shock}
U(x,t)=\begin{cases}
U_0, & x<c(t)\\
U_1, & x>c(t)
\end{cases}+(\xi(t),0)\delta(x-c(t)),
\end{equation}
where $\xi(t)$ and $u_s(t)$ are given by $(\ref{sol_xi})$
and $c(t)=\int_{0}^t u_s(s)ds$.
\medskip

\noindent
{\it Case $B_2$.}
The solution to (\ref{pgd}, \ref{id_approx})
consists of two overcompressive shadow waves interacting at time
$T=\frac{\mu}{y_{0}-y_{1}}\sim \sqrt{\mu}.$ One from $x=-\mu/2$
propagates with speed $y_{0}$ and strength $\xi_{0}t$, while the
other one propagates from $x=\mu/2$ with speed $y_{1}$ and strength $\xi_{1}t$.
The initial speed $u_s:=u_s(T+0)$ and strength $\xi_s:=\xi(T+0)$ of the
resulting overcompressive shadow wave are
$u_s=\frac{y_{0}\xi_{0}T+y_{1}\xi_{1}T}{\xi_{0}T+\xi_{1}T}$
and $\xi_s=\xi_{0}T+\xi_{1}T$.
We have $y_{0}-y_{1}\sim
\sqrt{\frac{\mu}{\xi_\delta}}\big(\sqrt{\rho_0}(u_0-u_\delta)
+\sqrt{\rho_1}(u_\delta-u_1)\big)$
as $\mu\to 0$. That fact together with $y_{0},y_{1}\to u_\delta$
as $\mu\to 0$ implies 
$\xi_s\to \xi_\delta$ and $u_s\to u_\delta$ as $\mu\to 0$.
A distributional limit as $\mu \to 0$ is an overcompressive delta shock
(\ref{delta_shock}) with $u_s(t)$
and $\xi(t)$ given by (\ref{sol_xi}).
\medskip

\noindent
{\it Case} $B_3$.
Immediately after the initial time, the solution consists 
of the CD wave combination and the shadow wave as in Case $A_3$.
The result of interaction between
the second contact discontinuity and the shadow wave is a new shadow wave
that propagates with a speed $u_s(t)\to u_1,$ $t\to \infty$. Now, $u_1<u_0$
and the first contact discontinuity will overtake the shadow wave when
$t=T_2\sim \sqrt{\mu}$. Both interactions occur when
$t\sim \sqrt{\mu}$, The distributional limit of 
approximate solution is (\ref{delta_shock}).
\medskip

Let us now write down solutions satisfying the backward energy condition. 
If $u_0\leq u_1$, the weak solution that solves (\ref{pgd},
\ref{pgd_id}) and satisfies backward energy condition is 
one or two (when $\mathop{\rm sign}(u_{0}u_{1})<0$) 
contact discontinuities combined with the delta contact 
discontinuity given by
\begin{equation}\label{sol_A}
U(x,t)=\begin{cases}
(\rho_0,u_0), & x<u_0t\\
\big(0,\frac{x}{t}\big), & u_0t<x<u_\delta t\\
\big(0,\frac{x}{t}\big), & u_\delta t<x<u_1 t\\
(\rho_1,u_1), & x>u_1 t,
\end{cases}+(\xi_\delta,0)\delta(x-u_\delta t)\end{equation}
where $u_\delta\in[u_0,u_1]$ is given in (\ref{iemin}) so that the total 
initial energy is minimized.

If $u_0>u_1$, the weak solution that satisfies the backward energy condition is
given by
\begin{equation}\label{sol_B}
U(x,t)=\begin{cases}
(\rho_0,u_0), & x<y t\\
(\rho_1,u_1), & x>y t
\end{cases}+ (\xi_{\delta}+\xi t,0)\delta(x-yt),
\end{equation}
where $\xi=\sqrt{\rho_0 \rho_1}(u_0-u_1),$
$y=\frac{u_0\sqrt{\rho_{0}}+u_{1}\sqrt{\rho_{1}}}{\sqrt{\rho_{0}}
+\sqrt{\rho_{1}}}$.
The speed and strength of
(\ref{sol_B}) are obtained from (\ref{sol_xi}) by putting $u_\delta=y$.
\medskip

The above results are summarized in the following theorem.

\begin{theorem}
Let (\ref{pgd}, \ref{pgd_id}) be given, where $\rho_0,\rho_1>0$ and
$\xi_\delta>0$. If $u_0>u_1$, the admissible weak solution satisfying the
backward energy condition is the delta shock wave propagating
with constant speed $y$ given by (\ref{sol_B}).
If $u_0\leq u_1$, the backward admissible solution locally conserves energy in
the sense of distributions and it is given by (\ref{sol_A}).
\end{theorem}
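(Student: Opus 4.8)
The statement summarizes the case analysis carried out above, so the plan has two parts: first, for each small $\mu$, to pick the value $u_\delta$ in (\ref{id_approx}) for which the approximate solution maximizes the local energy production $\mathcal{D}(0+)$; second, to let $\mu\to 0$ and read off the distributional solution. Since at $t=0+$ the states near $x=\pm L$ are $(\rho_0,u_0)$ and $(\rho_1,u_1)$ for every $u_\delta$, the boundary fluxes in $\tfrac{d}{dt}H_{[-L,L]}$ are $u_\delta$-independent, so inequality (\ref{a}) is equivalent to maximizing $\mathcal{D}(0+)$. On the initial interval before the first interaction the approximate solution is the superposition of the two waves issuing from $\pm\mu/2$: a CD wave combination contributes $\mathcal{D}=0$, while a shadow wave joining $(\rho_0,u_0)$ on the left to $(\xi_\delta/\mu,u_\delta)$ -- which occurs exactly when $u_\delta<u_0$ -- contributes $-\tfrac12 r_0^\mu(u_0-u_\delta)^3$ with $r_0^\mu=\rho_0(\xi_\delta/\mu)/(\sqrt{\xi_\delta/\mu}+\sqrt{\rho_0})^2$, and symmetrically on the right with $r_1^\mu$.

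Consider first $u_0\le u_1$ (Cases $A_1,A_2,A_3$). Then $u_\delta\mapsto\mathcal{D}(0+)$ equals $-\tfrac12 r_0^\mu(u_0-u_\delta)^3$ for $u_\delta<u_0$, equals $0$ for $u_\delta\in[u_0,u_1]$, and equals $-\tfrac12 r_1^\mu(u_\delta-u_1)^3$ for $u_\delta>u_1$; hence (\ref{a}) holds precisely for $u_\delta\in[u_0,u_1]$, and for each such $u_\delta$ one has $\mathcal{D}=0$ for all $t$, i.e.\ $\partial_t\eta+\partial_x Q=0$ in the sense of distributions. Among these the second criterion of Definition \ref{bec} selects the $u_\delta$ minimizing the total initial energy (computed with $\eta=\tfrac12\rho u^2$), whose only $u_\delta$-dependent term is $\tfrac12\xi_\delta u_\delta^2$; by strict convexity the minimizer is the point of $[u_0,u_1]$ nearest the origin, which is exactly (\ref{iemin}). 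Feeding this $u_\delta$ into the Case $A_2$ description gives (\ref{sol_A}): two contact discontinuities and a delta contact discontinuity separated by vacuum, one vacuum layer collapsing when $u_\delta\in\{u_0,u_1\}$.

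Now let $u_0>u_1$ (Cases $B_1,B_2,B_3$). The function $u_\delta\mapsto\mathcal{D}(0+)$ is continuous (indeed $C^1$), equals $-\tfrac12 r_0^\mu(u_0-u_\delta)^3$ on $(-\infty,u_1]$, equals $-\tfrac12 r_1^\mu(u_\delta-u_1)^3$ on $[u_0,\infty)$, and on $[u_1,u_0]$ equals the cubic $-\tfrac12 r_0^\mu(u_0-u_\delta)^3-\tfrac12 r_1^\mu(u_\delta-u_1)^3$; a short computation shows it is strictly increasing on $(-\infty,y^\mu]$ and strictly decreasing on $[y^\mu,\infty)$, where $y^\mu$ is the unique solution of $\sqrt{r_0^\mu}(u_0-u_\delta)=\sqrt{r_1^\mu}(u_\delta-u_1)$ in $(u_1,u_0)$. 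Thus for every small $\mu$ the value $y^\mu$ is the unique global maximizer of $\mathcal{D}(0+)$, so (\ref{a}) forces $u_\delta=y^\mu$, and $r_i^\mu\to\rho_i$ gives $y^\mu\to y$. By the Case $B_2$ analysis the distributional limit is the overcompressive delta shock (\ref{delta_shock}) with speed and strength obtained from (\ref{sol_xi}) at $u_\delta=y$; writing $\xi:=\sqrt{\rho_0\rho_1}(u_0-u_1)$ one checks $\rho_0\rho_1[u]^2=\xi^2$ and $y[\rho]-[\rho u]=\xi$, whence $\xi(t)=\xi_\delta+\xi t$ and $u_s(t)\equiv y$, which is (\ref{sol_B}).

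The only step requiring real work is identifying the $\mu\to 0$ limit in each case: one follows all wave interactions, each reducible -- by translating the data to the interaction point as described after Lemma 3.1 of \cite{mn_sr2019} -- to an instance of (\ref{pgd}, \ref{pgd_id}) solved by the explicit formulas (\ref{sol_xi}), and verifies that every interaction occurs at a time of order $\sqrt\mu$, so the fronts and strengths converge to those formulas as $\mu\to 0$. This is exactly the bookkeeping performed case by case in the paragraphs above; the selection of $u_\delta$ itself, being a one-dimensional extremum problem for $\mathcal{D}(0+)$, is elementary.
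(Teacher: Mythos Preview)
Your proof is correct and follows essentially the same approach as the paper: the theorem there is stated as a summary of the preceding case analysis $A_1$--$A_3$, $B_1$--$B_3$, and you reproduce that analysis in compressed form. Your presentation adds a couple of clarifying observations not spelled out in the paper---that the boundary fluxes at $\pm L$ are $u_\delta$-independent so (\ref{a}) reduces to maximizing $\mathcal{D}(0+)$, and the explicit verification that plugging $u_\delta=y$ into (\ref{sol_xi}) yields $\xi(t)=\xi_\delta+\xi t$ and $u_s(t)\equiv y$---but the route is the same.
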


\begin{remark}
Additionally, if the initial speed of a shadow wave (\ref{wSDW_l,r}) equals
$y$, its local energy production is maximal and equals $D_{\max}$
defined in (\ref{Dmax}) for each $t$. 
That is a consequence of the fact that
for a shadow wave with the initial speed $y$ we have $u_{s}(t)=y,$ $t\geq 0$. 
We say that such a wave is in its equilibrium state.
Since $u_{s}(t)\to y$ as $t\to \infty$ the following holds: 
A (shadow) wave that starts in the non-equilibrium state 
(a nonpositive local energy production across
its front has not reached a maximum value) will adjust its speed
to approach the equilibrium state. That is not the case in general, as it will
be demonstrated for systems (\ref{chap}) and (\ref{genchap}).
\end{remark}

\subsection{Chaplygin model}

The backward energy condition can also be applied successfully for 
the Chaplygin model (\ref{chap}). The energy density is 
$\eta=\rho u^2+\frac{1}{\rho}$ with the 
flux $Q=\rho u^3-\frac{u}{\rho}$.
Using (\ref{entr}) we get that the local energy production of a
shadow wave connecting $(\rho_0,u_0)$ to $(\rho_1,u_1)$ equals
\[\mathcal{D}=-s[\eta]+[Q]+\kappa s^2,\]
where $\kappa$ and $s$ are defined in (\ref{speed_chap}).
The structure of approximate solution and its analysis are similar to the
one for pressureless gas dynamics system. 

\begin{theorem}
Let the problem (\ref{chap}, \ref{pgd_id}) be given, 
where $\rho_0,\rho_1>0$ and $\xi_\delta>0$. 
If $\lambda_1(\rho_0,u_0)> \lambda_2(\rho_1,u_1)$, the
admissible weak solution that meets the backward energy condition
is a weighted delta shock.
If $\lambda_1(\rho_0,u_0)\leq \lambda_2(\rho_1,u_1)$, the solution satisfying
the backward energy condition locally conserves energy in the sense of
distributions and 
\[u_\delta=\begin{cases}
\lambda_2(\rho_{1}, u_{1}), & \text{if } \lambda_2(\rho_{1}, u_{1})<0\\
\lambda_1(\rho_{0}, u_{0}), & \text{if } \lambda_1(\rho_{0}, u_{0})>0\\
0, & \text{otherwise}.
\end{cases}\]
\end{theorem}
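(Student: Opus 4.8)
The plan is to mimic the structure of the pressureless analysis, replacing the explicit energy formulas with their Chaplygin analogues and exploiting the fact that for this system the energy density $\eta=\rho u^2+1/\rho$ is itself a convex entropy with flux $Q=\rho u^3-u/\rho$, so that the local energy production of a shadow wave is exactly $\mathcal{D}=-s[\eta]+[Q]+\kappa s^2$ with $s$ and $\kappa$ as in \eqref{speed_chap}. First I would split into the two regimes according to whether the Riemann problem with data $(\rho_0,u_0)$, $(\rho_1,u_1)$ is resolved classically (i.e.\ $\lambda_1(\rho_0,u_0)\le\lambda_2(\rho_1,u_1)$) or by a shadow wave. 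In the classical regime, the solution of the double Riemann problem \eqref{id_approx} is built from contact discontinuities, which have zero local energy production, so $\mathcal{D}=0$ for every admissible $u_\delta$ in the relevant interval; hence \eqref{a} is automatically an equality and the selection is forced by the second criterion in Definition~\ref{bec}, the minimal initial energy $\tfrac{\xi_\delta}{\mu}u_\delta^2+\tfrac{\mu}{\xi_\delta}+\dots$. Minimising the dominant term $\tfrac{\xi_\delta}{\mu}u_\delta^2$ over the allowed range of $u_\delta$ (which is the interval determined by $\lambda_1(\rho_0,u_0)$ and $\lambda_2(\rho_1,u_1)$ together with the sign constraints coming from which elementary waves appear) yields exactly the three cases in the statement: $u_\delta=0$ when that value is admissible, and otherwise the endpoint closest to zero, which is $\lambda_2(\rho_1,u_1)$ when it is negative and $\lambda_1(\rho_0,u_0)$ when it is positive.

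Next I would treat the overcompressive regime $\lambda_1(\rho_0,u_0)>\lambda_2(\rho_1,u_1)$. Here one runs through the same case division as in the pressureless subsection ($u_\delta$ below, between, or above the two relevant characteristic speeds), computing $\mathcal{D}$ as a function of $u_\delta$ and $\mu$ for the double-Riemann configuration emanating from $-\mu/2$ and $\mu/2$. The key point is to show that $\sup_{u_\delta}\mathcal{D}$, taken over the range where two shadow waves form, is attained (in the limit $\mu\to0$) at the equilibrium speed of the composite shadow wave, i.e.\ at the value of $u_\delta$ for which the Rankine--Hugoniot speed of the wave connecting $(\rho_0,u_0)$ to $(\rho_1,u_1)$ directly coincides with the speed one would get from combining the two pieces; concretely this is the $\mu\to0$ limit $y^\mu\to y$ of the maximiser, exactly as in \eqref{Dmax}. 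One then checks that this maximal value strictly exceeds the production one gets from any $u_\delta$ outside that interval (the analogue of the $B_1$, $B_3$ estimates), so the backward energy condition singles out that $u_\delta$, and letting $\mu\to0$ the approximate solution converges to the weighted delta shock whose speed and strength are given by \eqref{sol_xi} with $u_\delta=y$ — the "weighted delta shock" of the statement.

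The main obstacle is the explicit identification and maximisation of $\mathcal{D}(u_\delta,\mu)$ for the Chaplygin pressure: unlike the pressureless case, the shadow-wave speed \eqref{speed_chap} involves the Rankine--Hugoniot deficit $\kappa=\sqrt{[\rho u]^2-[\rho][\rho u^2-\rho^{-1}]}$, so $\mathcal{D}=-s[\eta]+[Q]+\kappa s^2$ is no longer a clean cubic in $u_\delta$, and one must manipulate the square roots (using the factorisation $\kappa^2=\rho_0\rho_1(\lambda_1(\rho_0,u_0)-\lambda_1(\rho_1,u_1))(\lambda_2(\rho_0,u_0)-\lambda_2(\rho_1,u_1))$ noted after \eqref{speed_chap}) to see that the critical point of $\mathcal{D}$ in $u_\delta$ still converges as $\mu\to0$ to the equilibrium configuration. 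I expect this to reduce, after eliminating $u_0,u_1$ via the intermediate-state relations, to showing positivity of an expression analogous to the quantities $I_1,I_2$ in the proof of the generalized-Chaplygin theorem; once that positivity is in hand, the convergence statements and the description of the limiting solution follow by the same interaction analysis (via the Lemma from \cite{mn_sr2019}) used in the pressureless subsection, and the classical-regime part is, as noted, purely a matter of minimising a quadratic in $u_\delta$.
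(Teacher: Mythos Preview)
Your treatment of the classical regime $\lambda_1(\rho_0,u_0)\le\lambda_2(\rho_1,u_1)$ is fine and matches the paper: contact discontinuities give $\mathcal{D}=0$, so the tie-break is the quadratic minimisation of $u_\delta^2$ over $[\lambda_1(\rho_0,u_0),\lambda_2(\rho_1,u_1)]$, which produces exactly the three-case formula.

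In the overcompressive regime, however, your plan contains a genuine misconception. You expect the maximiser of $\mathcal{D}$ to be the \emph{equilibrium} speed of the composite shadow wave (the analogue of $y$ in the pressureless case), and you intend to establish this via an $I_1,I_2$-type positivity argument. Neither is correct for the Chaplygin system. The paper proceeds by an asymptotic expansion: writing $s-u_\delta=(u_\delta-u_0)d_1^\mu$ and $s-u_0=(u_\delta-u_0)d_2^\mu$ for the left shadow wave, one checks $d_1^\mu=\mathcal{O}(\sqrt{\mu})$ and $d_2^\mu=1+\mathcal{O}(\sqrt{\mu})$, so that as $\mu\to0$ the left contribution collapses to the explicit cubic-plus-linear function $\rho_0(u_\delta-u_0)^3+\rho_0^{-1}(u_0-u_\delta)$, and similarly on the right. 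The total $\mathcal{D}$ is therefore (asymptotically) a clean cubic, and its maximiser on $[\lambda_2(U_1),\lambda_1(U_0)]$ is computed directly as
\[
x_\ast=\frac{[\rho u]}{[\rho]}+\frac{1}{[\rho]}\sqrt{\rho_0\rho_1[u]^2+\tfrac{1}{3}[\rho][\rho^{-1}]},
\]
possibly clipped to an endpoint. Crucially, $x_\ast$ is \emph{not} the shadow-wave speed $s$ from \eqref{speed_chap}; the paper explicitly notes that the resulting delta shock does not start in (and never reaches) its equilibrium state, in contrast with pressureless gas. So the ``$y^\mu\to y$'' picture you are trying to reproduce simply fails here, and the $I_1,I_2$ machinery is both unnecessary and aimed at the wrong target. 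The correct route is the $\mu\to0$ asymptotic reduction, which removes the square-root dependence you were worried about and makes the optimisation elementary.
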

\begin{proof}
Let us first consider the case 
$\lambda_1(\rho_0,u_0)\leq \lambda_2(\rho_1,u_1)$.
The situation is analogous to the one obtained for 
$u_{0}\leq u_{1}$ in the pressureless system. 

If $u_\delta\in [\lambda_1(\rho_0,u_0), \lambda_2(\rho_1,u_1)]$, 
the solution to the problem (\ref{chap}, \ref{id_approx}) 
is a combination of contact discontinuities, 
and its local energy production equals zero for each
$u_\delta$. Otherwise it would be negative due to presence of a shadow wave.
The distributional limit of such solution is 
\begin{equation*}
U(x,t)=\begin{cases}
(\rho_0,u_0), & x<\lambda_1(\rho_{0},u_{0})t\\
(\rho_{m_1},u_{m_1}), & \lambda_1(\rho_{0},u_{0})t<x<u_\delta t\\
(\rho_{m_2},u_{m_2}), & u_\delta t<x<\lambda_2(\rho_1,u_1) t\\
(\rho_1,u_1), & x>\lambda_2(\rho_1,u_1) t
\end{cases}+(\xi_\delta,0)\delta(x-u_\delta t),
\end{equation*}
where
\[
\begin{split}
(\rho_{m_1},u_{m_1}) &=\Big(\frac{2}{u_\delta-u_0+\rho_0^{-1}},
\frac{u_\delta+u_0-\rho_{0}^{-1}}{2}\Big),\\
(\rho_{m_2},u_{m_2}) &=\Big(\frac{2}{u_1+\rho_{1}^{-1}-u_\delta},
\frac{u_\delta+u_1+\rho_{1}^{-1}}{2}\Big).
\end{split}
\]
The value of $u_\delta$ that minimizes $u_\delta^2$, and consequently
the total initial energy is a proper choice for $u$ component
of initial data.
 
There are three possible types of solution when $\lambda_1(\rho_0,u_0)>
\lambda_2(\rho_1,u_1)$.
Let $u_{\delta}<\lambda_{1}(\rho_{0},u_{0})$. Then $U_{0}=(\rho_{0},u_{0})$ and
$U_\delta=\big(\frac{\xi}{\mu},u_{\delta}\big)$ are connected by 
the shadow wave with strength
$\kappa=\sqrt{[\rho u^2]_{0}-[\rho]_{0}[\rho u^2-\rho^{-1}]_{0}}$ and speed
$s=\frac{[\rho u]_{0}}{[\rho]_{0}}+\frac{\kappa}{[\rho]_{0}}$. 
We have used the notation $[\cdot]_{0}=\cdot|_{U_{\delta}}-\cdot|_{U_{0}}$, 
see (\ref{speed_chap}). Then
\[
\begin{split}
s-u_\delta & =(u_\delta-u_0)\Big(\frac{\rho_{0}}{\xi_{\delta}/\mu-\rho_{0}}
+\frac{\kappa}{(\xi_{\delta}/\mu-\rho_{0})(u_{\delta}-u_{0})}\Big)=:
(u_\delta-u_0)d_{1}^{\mu} \text{ and}\\
s-u_0 & =(u_\delta-u_0)\Big(\frac{\xi_{\delta}/\mu}
{\xi_{\delta}/\mu-\rho_{0}}+\frac{\kappa}
{(\xi_{\delta}/\mu-\rho_{0})(u_{\delta}-u_{0})}\Big)
=:(u_\delta-u_0)d_{2}^{\mu}.
 \end{split}
\]
The local energy production of the shadow wave connecting 
$U_{0}$ to $U_{\delta}$ equals
\[
\begin{split}
 \mathcal{D} &=\frac{\xi_\delta}{\mu}(u_\delta-s)^3+\rho_{0}(s-u_0)^3
+\rho_{0}^{-1}(u_0-s)+\frac{\mu}{\xi_{\delta}}(s-u_{0})\\
&= (u_{0}-u_{\delta})^3\Big((d_{1}^{\mu})^3\frac{\xi_{\delta}}{\mu}
-(d_{2}^{\mu})^3\rho_{0}\Big)+(u_{0}-u_{\delta})\Big(\rho_{0}^{-1}d_{2}^{\mu}
-\frac{\mu}{\xi_{\delta}}d_{1}^{\mu}\Big).
\end{split}
\] 
We have $d_{1}^{\mu}=\mathcal{O}(\sqrt{\mu})$ and
$d_{2}^{\mu}=1+\mathcal{O}(\sqrt{\mu}),$ $\mu\to 0.$ Thus,
\[\begin{split}
&\mathcal{D}\approx\rho_0(u_\delta-u_0)^3+\frac{1}{\rho_0}(u_0-u_\delta)
\text{ for } \mu \text{ small enough}.
\end{split}\]
Similarly, if $u_\delta>\lambda_2(U_1)$, then 
$U_\delta$ and $U_1=(\rho_{1},u_{1})$
are connected by a shadow wave and 
\[\begin{split}
& \mathcal{D}\approx\rho_1(u_1-u_\delta)^3+\frac{1}{\rho_1}(u_\delta-u_1)
\text{ for } \mu \text{ small enough}.
\end{split}\]

If $u_\delta\leq \lambda_2(U_{1})$ one can see that $\mathcal{D}$
has its maximum at $u_\delta=\lambda_2(U_{1})$. In the case
$u_\delta\geq \lambda_1(U_{0})$, $\mathcal{D}$ has it at
$u_\delta=\lambda_1(U_{0})$ as it was the case for 
the pressureless gas dynamics.
In those two cases solutions to (\ref{chap}, \ref{id_approx}) are 
combinations of two contact discontinuities and shadow waves.
The only case left to consider is when 
$\lambda_1(U_{0})\geq u_\delta\geq \lambda_2(U_{1})$. The solution is
the combination of two overcompressive shadow waves which
interact at time $T\sim \sqrt{\mu}$ giving a single overcompressive shadow wave
with variable speed (see \cite{mn2014} for the proof).
In that case
\[
\mathcal{D}\approx \rho_0(u_\delta-u_0)^3+\frac{1}{\rho_0}
(u_0-u_\delta)+ \rho_1(u_1-u_\delta)^3+\frac{1}{\rho_1}(u_\delta-u_1),
\, \mu\to 0.
\]
The local energy production of such solution is maximized for 
\[
u_\delta=\begin{cases}
\lambda_2(\rho_{1}, u_{1}), 
& \text{if } x_{\ast}\leq \lambda_2(\rho_{1}, u_{1})\\
x_{\ast}, & \text{if } x_{\ast}\in 
(\lambda_2(\rho_{1}, u_{1}),\lambda_1(\rho_{0}, u_{0}))\\
\lambda_1(\rho_{0}, u_{0}), & \text{if } x_{\ast}
\geq \lambda_1(\rho_{0}, u_{0}),
\end{cases}
\] 
where $x_{\ast}=\frac{[\rho u]}{[\rho]}+\frac{1}{[\rho]}\sqrt{\rho_0
\rho_1[u^2]+\frac{1}{3}[\rho][\rho^{-1}]}$. 
It can be proved that 

$x_{\ast}\leq \lambda_{1}(\rho_{0}, u_{0})$ if $[u]+\rho_{0}^{-1}\leq
-\rho_{1}^{-\frac{1}{2}}\sqrt{\rho_{0}^{-1}+\frac{1}{3}[\rho^{-1}]}$ and

$x_{\ast}\geq \lambda_2(\rho_{1}, u_{1})$ if $[u]+\rho_{1}^{-1}\leq
-\rho_{0}^{-\frac{1}{2}}\sqrt{\rho_{1}^{-1}+\frac{1}{3}[\rho^{-1}]}$. 

Thus, in this case the backward energy condition is satisfied for 
some $u_{\delta}$ between $\lambda_1(U_{0})$ and $\lambda_2(U_{1})$. 

Unlike the system (\ref{pgd}), the distributional limit of this solution is a
weighted delta shock and its local energy production is not constant with
respect to $t$ which follows from the fact that the speed of shadow wave
joining $U_0$ and $U_1$ is not equal to $x_{\ast}.$ 
Such a wave will not reach its equilibrium state in infinity.
\end{proof}

\subsection{Generalized Chaplygin gas model}

The last model analysed in the paper is system
(\ref{genchap}) with energy pair (\ref{en_pair}).
Denote the following values
\[
A_1=u_{0}-\rho_{0}^{-\frac{1+\alpha}{2}}, \; 
A_2=u_{0}+\frac{2\sqrt{\alpha}}{1+\alpha}\rho_{0}^{-\frac{1+\alpha}{2}},\; 
B_1=u_{1}-\frac{2\sqrt{\alpha}}{1+\alpha}\rho_{1}^{-\frac{1+\alpha}{2}}, \; 
B_2=u_{1}+\rho_{1}^{-\frac{1+\alpha}{2}}.
\]
Depending on relations between the above constants,
there are the following possibilities for a small time solution.
\begin{itemize} 
\item The state $(\rho_0,u_0)$ can be connected to
$(\frac{\xi_\delta}{\mu},u_\delta)$ by
\begin{itemize}
\item $R_1+R_2$ if $u_{\delta}>A_2$
\item $S_1+R_2$ if $A_1<u_{\delta}\leq A_2$ 
\item shadow wave if $u_\delta\leq A_1$
\end{itemize}
\item The state $(\frac{\xi_\delta}{\mu},u_\delta)$ can be connected to
$(\rho_1,u_1)$ by
\begin{itemize}
\item $R_1+R_2$ if $u_{\delta}<B_1$
\item $R_1+S_2$ if $B_1\leq u_{\delta}< B_2$ 
\item shadow wave if $u_\delta\geq B_2$. 
\end{itemize}
\end{itemize}

Note that the local energy production for a rarefaction wave 
equals zero due to their continuity and it is negative for
other waves. The following lemmas will describe the production 
in all the above cases.

\begin{lemma} 
The local energy production of the shadow wave connecting $(\rho_0,u_0)$ and
$\big(\frac{\xi_\delta}{\mu}, u_\delta\big)$, $u_{\delta}\leq A_{1}$ equals
\[
\mathcal{D}=f_{1}(u_{\delta}):=
\frac{1}{2}\rho_0(u_\delta-u_0)^3-\frac{\alpha}{1+\alpha}
\rho_{0}^{-\alpha}(u_\delta-u_0)+\mathcal{O}(\sqrt{\mu})
\text{ as } \mu \to 0
\]
and the limit has a maximum at $A_{1}$.

For the shadow wave connecting $\big(\frac{\xi_\delta}{\mu}$,$u_\delta\big)$ 
and $(\rho_{1},u_{1})$, $u_{\delta}\geq B_{2}$ we have
\[
\mathcal{D}=f_{2}(u_{\delta}):=\frac{1}{2}\rho_1(u_1-u_\delta)^3
-\frac{\alpha}{1+\alpha}\rho_{1}^{-\alpha}(u_1-u_\delta)
+\mathcal{O}(\sqrt{\mu})
\text{ as } \mu \to 0.
\]
The limit has a maximum at $B_2$.

The solution composed of two shadow waves exists if $B_2\leq u_{\delta}\leq
A_1$ and its local energy production has a maximum at
\begin{equation*}\ 
u_\delta=\frac{[\rho u]}{[\rho]}+\frac{1}{[\rho]}\sqrt{\rho_0 \rho_1 [u]^2
+\frac{2}{3}\frac{\alpha}{1+\alpha}[\rho][\rho^{-\alpha}]}=:x_0
\end{equation*}
if $x_0\in [B_2,A_1]$. Otherwise, a maximum point will be $B_2$ or $A_1$.
\end{lemma}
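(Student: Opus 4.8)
The plan is to pass to the $\mu\to0$ limit of the shadow–wave energy production to identify $f_{1},f_{2}$, and then to reduce the three maximality assertions to elementary calculus. \emph{Step 1 (a bounded form of $\mathcal{D}$).} By (\ref{entr}) with the pair (\ref{en_pair}), a shadow wave joining a left state $(\rho_{L},u_{L})$ to a right state $(\rho_{R},u_{R})$ has $\mathcal{D}=-c[\eta]+[Q]+\tfrac12c^{2}\kappa_{1}$, where $[\cdot]=(\cdot)_{R}-(\cdot)_{L}$ and $c,\kappa_{1}$ are the speed and Rankine--Hugoniot deficit from the shadow-wave existence lemma for system (\ref{genchap}). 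They satisfy $\kappa_{1}=c[\rho]-[\rho u]$ (the stated formula for $c$, rearranged) and $c\kappa_{1}=c[\rho u]-[\rho u^{2}-\rho^{-\alpha}]$ (equivalent to $\kappa_{1}^{2}=[\rho u]^{2}-[\rho][\rho u^{2}-\rho^{-\alpha}]$, i.e.\ the definition of $\kappa_{1}$). Substituting $\tfrac12c^{2}\kappa_{1}=\tfrac12c(c[\rho u]-[\rho u^{2}-\rho^{-\alpha}])$ and completing the square in the $\rho u$-terms gives the manifestly bounded expression
\[
\mathcal{D}=\tfrac12\rho_{R}u_{R}(u_{R}-c)^{2}-\tfrac12\rho_{L}u_{L}(u_{L}-c)^{2}-\tfrac{\alpha}{1+\alpha}\bigl(\rho_{R}^{-\alpha}u_{R}-\rho_{L}^{-\alpha}u_{L}\bigr)+\tfrac{\alpha-1}{2(1+\alpha)}c\,\bigl(\rho_{R}^{-\alpha}-\rho_{L}^{-\alpha}\bigr).
\]

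\emph{Step 2 (the limit $\mu\to0$).} For the left wave take $(\rho_{L},u_{L})=(\rho_{0},u_{0})$, $(\rho_{R},u_{R})=(R,u_{\delta})$ with $R:=\xi_{\delta}/\mu\to\infty$ and $u_{\delta}\le A_{1}$. Then $\kappa_{1}^{2}/R=\rho_{0}(u_{0}-u_{\delta})^{2}-R^{-1}(\rho_{0}-R)(R^{-\alpha}-\rho_{0}^{-\alpha})\to\rho_{0}(u_{0}-u_{\delta})^{2}-\rho_{0}^{-\alpha}=:K\ge0$, the non-negativity being exactly the condition $u_{\delta}\le A_{1}$; and from $c-u_{\delta}=(\kappa_{1}+\rho_{0}(u_{\delta}-u_{0}))/(R-\rho_{0})$ we get $c\to u_{\delta}$ and $R(u_{\delta}-c)^{2}\to K$, while $R^{-\alpha}\to0$. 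Since in the boxed expression every divergent factor ($\rho_{R}u_{R}$, $\rho_{R}^{-\alpha}u_{R}$, \dots) enters only through one of these bounded limits, plugging in and simplifying yields $\mathcal{D}\to f_{1}(u_{\delta})$; carrying one further term shows the remainder is $\mathcal{O}(\sqrt{\mu})$ (the a priori $\mathcal{O}(R^{-\alpha})$ contributions cancel). The right wave, with $(\rho_{L},u_{L})=(R,u_{\delta})$, $(\rho_{R},u_{R})=(\rho_{1},u_{1})$, $u_{\delta}\ge B_{2}$, is entirely parallel ($\kappa_{1}^{2}/R\to\rho_{1}(u_{1}-u_{\delta})^{2}-\rho_{1}^{-\alpha}\ge0$ by $u_{\delta}\ge B_{2}$), giving $\mathcal{D}=f_{2}(u_{\delta})+\mathcal{O}(\sqrt{\mu})$.

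\emph{Step 3 (the maxima).} With $v=u_{\delta}-u_{0}\le-\rho_{0}^{-(1+\alpha)/2}$ one has $f_{1}'(v)=\tfrac32\rho_{0}v^{2}-\tfrac{\alpha}{1+\alpha}\rho_{0}^{-\alpha}$, which is positive whenever $v^{2}\ge\rho_{0}^{-1-\alpha}$ because $\tfrac32-\tfrac{\alpha}{1+\alpha}=\tfrac{3+\alpha}{2(1+\alpha)}>0$; hence $f_{1}$ is increasing on $\{u_{\delta}\le A_{1}\}$ and is maximized at $A_{1}$, and symmetrically $f_{2}$ at $B_{2}$. For the double shadow wave (present for $B_{2}\le u_{\delta}\le A_{1}$), the two fronts contribute additively until they meet at $t=T\sim\sqrt{\mu}$, so at $t=0{+}$ its energy production is $\mathcal{D}_{1}+\mathcal{D}_{2}=g(u_{\delta})+\mathcal{O}(\sqrt{\mu})$ with $g:=f_{1}+f_{2}$, a cubic in $u_{\delta}$ (a quadratic, with unique critical point $\tfrac12(u_{0}+u_{1})=\lim_{\rho_{1}\to\rho_{0}}x_{0}$, when $\rho_{0}=\rho_{1}$). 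Solving $g'(u_{\delta})=0$, i.e.\ $[\rho]u_{\delta}^{2}-2[\rho u]u_{\delta}+[\rho u^{2}]-\tfrac{2\alpha}{3(1+\alpha)}[\rho^{-\alpha}]=0$, and using the identity $[\rho u]^{2}-[\rho][\rho u^{2}]=\rho_{0}\rho_{1}[u]^{2}$, the roots are $\tfrac{[\rho u]}{[\rho]}\pm\tfrac1{[\rho]}\sqrt{\rho_{0}\rho_{1}[u]^{2}+\tfrac23\tfrac{\alpha}{1+\alpha}[\rho][\rho^{-\alpha}]}$; since $g''(u_{\delta})=3([\rho u]-[\rho]u_{\delta})$ is negative at the ``$+$'' root $x_{0}$ and positive at the other one, $x_{0}$ is the local maximum, and an elementary discussion of the parabola $g'$ on $[B_{2},A_{1}]$ shows that $g$ attains its maximum over the interval at $x_{0}$ when $x_{0}\in[B_{2},A_{1}]$, and at $B_{2}$ or $A_{1}$ otherwise.

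\emph{Main obstacle.} The delicate point is Step 2: $[\eta]$, $[Q]$ and $\kappa_{1}$ each blow up like $R$ or $\sqrt{R}$, so one must verify that every divergent contribution to $\mathcal{D}$ cancels and that the surviving error is $\mathcal{O}(\sqrt{\mu})$ rather than merely $o(1)$ (this requires the $\mathcal{O}(R^{-\alpha})$ terms to cancel). Passing through the Rankine--Hugoniot relations, as in Step 1, is precisely what makes these cancellations transparent and reduces everything to the limits $c\to u_{\delta}$ and $R(u_{\delta}-c)^{2}\to K$.
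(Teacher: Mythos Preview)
Your proof is correct and in fact carries out considerably more than the paper's own argument, which essentially takes the asymptotic formulas for $f_{1}$ and $f_{2}$ for granted (presumably in analogy with the computation done for the Chaplygin case) and only addresses the location of the maxima. Your Steps~1--2 supply what the paper omits: the algebraic rearrangement $\mathcal{D}=\tfrac12[\rho u(u-c)^{2}]-\tfrac{\alpha}{1+\alpha}[\rho^{-\alpha}u]+\tfrac{\alpha-1}{2(1+\alpha)}c[\rho^{-\alpha}]$, obtained via the Rankine--Hugoniot identities $\kappa_{1}=c[\rho]-[\rho u]$ and $c\kappa_{1}=c[\rho u]-[\rho u^{2}-\rho^{-\alpha}]$, is a clean device that makes the divergent contributions manifestly cancel and reduces the limit to $c\to u_{\delta}$, $R(u_{\delta}-c)^{2}\to K$. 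This is genuinely different from (and tidier than) the direct expansion the paper uses in the analogous Chaplygin computation.

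For the maxima, your argument and the paper's are equivalent: the paper locates the critical point of $f_{1}$ at $u_{0}-\sqrt{\tfrac{2\alpha}{3(1+\alpha)}}\,\rho_{0}^{-(1+\alpha)/2}>A_{1}$ and infers monotonicity on $(-\infty,A_{1}]$, whereas you show $f_{1}'>0$ there directly from $\tfrac32-\tfrac{\alpha}{1+\alpha}>0$; both are one-line calculus. The derivation of $x_{0}$ is identical. One small point worth tightening: both you and the paper assert, without a full check, that when $x_{0}\in[B_{2},A_{1}]$ the cubic $g=f_{1}+f_{2}$ attains its global maximum on the interval at $x_{0}$ rather than at an endpoint; this follows once one notes that the other critical point of $g$ is a local minimum lying on the same side of $x_{0}$ as the farther endpoint, but it deserves a sentence. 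Your remark that the $\mathcal{O}(R^{-\alpha})$ contributions cancel (needed for the error to be $\mathcal{O}(\sqrt{\mu})$ when $\alpha<\tfrac12$) is asserted rather than verified, but the paper does not verify it either.
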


\begin{proof}
By using the cubic function properties, we can see that $f_{1}$ has a 
maximum at $u_\delta=u_0-\sqrt{\alpha}\sqrt{\frac{2}{3(1+\alpha)}}
\rho_0^{-\frac{1+\alpha}{2}}>A_1$, while $f_{2}$ has a maximum at
$u_\delta=u_1+\sqrt{\alpha}\sqrt{\frac{2}{3(1+\alpha)}}
\rho_1^{-\frac{1+\alpha}{2}}<B_2$.
The sum $f_{1}+f_{2}$ 
has a maximum at 
\[ 
x_{0}=\frac{[\rho u]}{[\rho]}+\frac{1}{[\rho]}
\sqrt{\rho_0 \rho_1 [u]^2+\frac{2}{3}\frac{\alpha}{1+\alpha}
[\rho][\rho^{-\alpha}]}.
\]
If $x_0\in (B_2,A_1)$, then $u_{\delta}=x_{0}$. 
Otherwise, $u_{\delta}$ is exactly one of the endpoints of the interval, 
$u_{\delta}=B_2$ or $u_{\delta}=A_1$ depending on the given values 
for $(u_{0},\rho_{0})$ and $(u_{1},\rho_{1})$, since a solution composed of two shadow waves exists if 
$B_2\leq u_{\delta}\leq A_1$.
\end{proof}

\begin{lemma}
The local energy production of the 
$S_1+R_2$ solution connecting $(\rho_0,u_0)$ and
$\big(\frac{\xi_\delta}{\mu}, u_\delta\big)$ is
\[
\mathcal{D}= A(\rho_{0},\rho_{m})\Big(\frac{1}{2}
\frac{\rho_{m}-\rho_{0}}{\rho_{m}}(\rho_{0}^{-\alpha}
-\rho_{m}^{-\alpha})+\frac{1}{1+\alpha}(\rho_{m}^{-\alpha}
-\rho_{0}^{-\alpha})+\frac{\alpha}{1+\alpha}\rho_{m}^{-\alpha}
\frac{\rho_{m}-\rho_{0}}{\rho_{m}}\Big).
\]
The value $A(\rho_{0},\rho_{m})$ is defined in (\ref{A_0m}), while $\rho_m$
satisfies 
\[
u_0-\frac{\rho_m-\rho_0}{\rho_m}A(\rho_{0},\rho_{m})
=u_\delta-\frac{2\sqrt{\alpha}}{1+\alpha}\rho_{m}^{-\frac{1+\alpha}{2}},\; 
\rho_{m}>\rho_0.
\] 
Also, $\mathcal{D}$ increases with respect to $u_{\delta}$ and $0\leq 
\frac{\partial \mathcal{D}}{\partial u_\delta}
\leq m_1:=\big(\frac{3}{2}-\frac{\alpha}{1+\alpha}\big)\rho_{0}^{-\alpha}$ 
for $u_\delta\in [A_1,A_2]$.
\end{lemma}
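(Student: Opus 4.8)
The plan is to compute the local energy production $\mathcal D$ directly from the shock--rarefaction structure of the wave connecting $(\rho_0,u_0)$ on the left to $U_\delta=\big(\frac{\xi_\delta}{\mu},u_\delta\big)$ on the right, using the intermediate state $(\rho_m,u_m)$ where the $S_1$-shock meets the $R_2$-rarefaction. Since a rarefaction has zero local energy production (as noted in Section~2 and just before the lemma), only the $S_1$-shock contributes, so $\mathcal D = -c_1[\eta]+[Q]$ across the shock, where $c_1=u_0-A(\rho_0,\rho_m)$ is the shock speed from the earlier formula, $[\eta]=\eta(\rho_m,u_m)-\eta(\rho_0,u_0)$ and similarly for $Q$, with $(\eta,Q)$ the pair~(\ref{en_pair}). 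First I would substitute $u_m=u_0-\frac{\rho_m-\rho_0}{\rho_m}A(\rho_0,\rho_m)$ (the Rankine--Hugoniot relation for the $1$-shock, consistent with the first line of (\ref{intermediate})) into $[\eta]$ and $[Q]$, expand, and cancel the kinetic-energy cross terms using the Rankine--Hugoniot relations for mass and momentum; the algebra is the same bookkeeping already done for $\mathcal D^{cl}$ in the Theorem's proof, restricted to a single shock. After the dust settles, the $u$-dependence drops out and one is left with the stated closed form in terms of $A(\rho_0,\rho_m)$, $\rho_0$, $\rho_m$ and the three explicit terms: the $\frac12\frac{\rho_m-\rho_0}{\rho_m}(\rho_0^{-\alpha}-\rho_m^{-\alpha})$ term coming from $-c_1[\eta]+[Q]$ kinetic part, and the two internal-energy terms $\frac{1}{1+\alpha}(\rho_m^{-\alpha}-\rho_0^{-\alpha})$ and $\frac{\alpha}{1+\alpha}\rho_m^{-\alpha}\frac{\rho_m-\rho_0}{\rho_m}$ from the $\rho^{-\alpha}$ pieces of $\eta$ and $Q$.

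Next I would establish the implicit equation for $\rho_m$: equate the $2$-Riemann invariant along the rarefaction, $u+\frac{2\sqrt\alpha}{1+\alpha}\rho^{-\frac{1+\alpha}{2}}$, at the states $(\rho_m,u_m)$ and $U_\delta=(\xi_\delta/\mu,u_\delta)$, and note that as $\mu\to 0$ the density $\xi_\delta/\mu\to\infty$, so $\rho^{-\frac{1+\alpha}{2}}\big|_{U_\delta}\to 0$; combined with $u_m=u_0-\frac{\rho_m-\rho_0}{\rho_m}A(\rho_0,\rho_m)$ this yields exactly $u_0-\frac{\rho_m-\rho_0}{\rho_m}A(\rho_0,\rho_m)=u_\delta-\frac{2\sqrt\alpha}{1+\alpha}\rho_m^{-\frac{1+\alpha}{2}}$ with $\rho_m>\rho_0$ (the shock compresses). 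One should check that this equation has a unique solution $\rho_m=\rho_m(u_\delta)>\rho_0$ on the relevant range $u_\delta\in[A_1,A_2]$: at $u_\delta=A_1=u_0-\rho_0^{-\frac{1+\alpha}{2}}$ the shock degenerates, $\rho_m\to\rho_0^+$, while increasing $u_\delta$ increases $\rho_m$ monotonically, with the upper end $u_\delta=A_2$ corresponding to a fixed finite $\rho_m$; this monotonicity comes from implicit differentiation and the sign of the derivatives of $A(\rho_0,\cdot)$.

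For the monotonicity claim $0\le \partial\mathcal D/\partial u_\delta\le m_1$ I would differentiate the closed form through $\rho_m$, i.e. $\frac{\partial\mathcal D}{\partial u_\delta}=\frac{\partial\mathcal D}{\partial \rho_m}\cdot\frac{\partial\rho_m}{\partial u_\delta}$, where $\partial\rho_m/\partial u_\delta>0$ from the previous step. The lower bound $\partial\mathcal D/\partial u_\delta\ge 0$ then amounts to $\partial\mathcal D/\partial\rho_m\ge 0$, which I expect to reduce, after factoring out $(\rho_m-\rho_0)$ and using $0<\alpha<1$, to an inequality among powers of $\rho_0$ and $\rho_m$ that holds because $\rho_m>\rho_0$ and $\rho^{-\alpha}$ is decreasing and convex; a cleaner route is to note that $\mathcal D$ is the (nonnegative magnitude of the) energy dissipated across a single admissible shock joining $(\rho_0,u_0)$ to $(\rho_m,u_m)$ and that a stronger shock (larger $\rho_m$, hence smaller $u_\delta\mapsto$ correction — here larger $u_\delta$) dissipates more, giving $\partial\mathcal D/\partial u_\delta\ge 0$ on physical grounds, which I would then confirm by the sign computation. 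The upper bound by $m_1=\big(\frac32-\frac\alpha{1+\alpha}\big)\rho_0^{-\alpha}$ is the genuinely delicate part: one evaluates the derivative at the degenerate endpoint $u_\delta=A_1$ (where $\rho_m=\rho_0$, all higher-order terms in $\rho_m-\rho_0$ vanish) and shows the leading coefficient is exactly $\big(\frac32-\frac\alpha{1+\alpha}\big)\rho_0^{-\alpha}$, then argues the derivative does not exceed this value on the whole interval $[A_1,A_2]$ by checking $\partial^2\mathcal D/\partial u_\delta^2\le 0$ there, i.e. concavity of $\mathcal D$ in $u_\delta$ on that range. The main obstacle I anticipate is precisely this concavity / sharp-constant estimate: unlike the positivity, it requires controlling the second variation of $\mathcal D$ through the implicit function $\rho_m(u_\delta)$, where the interplay between the algebraic $\rho_m-\rho_0$ factors and the $\rho_m^{-\alpha}$, $A(\rho_0,\rho_m)$ terms must be tracked carefully; the saving grace is that the whole estimate is ``anchored'' at $\rho_m=\rho_0$, so a Taylor expansion in $\rho_m-\rho_0$ there, combined with monotonicity of the relevant coefficients in $\rho_m$, should close it without a brute-force global computation.
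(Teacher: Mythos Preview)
Your derivation of the closed form for $\mathcal{D}$ and of the implicit equation for $\rho_m$ is in line with the paper. The problem is in the monotonicity part, where you have the orientation of $\rho_m(u_\delta)$ backwards, and this breaks your upper-bound argument.

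You write that at $u_\delta=A_1$ ``the shock degenerates, $\rho_m\to\rho_0^+$'' and that ``increasing $u_\delta$ increases $\rho_m$''. In fact the opposite holds: at $u_\delta=A_2$ (the boundary with $R_1+R_2$) the $1$-shock degenerates and $\rho_m=\rho_0$, while as $u_\delta$ decreases toward $A_1$ (the boundary with the shadow wave) the shock strengthens and $\rho_m\to\infty$. This is visible directly from the implicit equation (set $\rho_m=\rho_0$ and recover $u_\delta=A_2$; let $\rho_m\to\infty$ and recover $u_\delta=A_1$), and it is confirmed by the sign $\partial\rho_m/\partial u_\delta<0$ that the paper obtains from implicit differentiation. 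Consequently your plan ``evaluate $\partial\mathcal{D}/\partial u_\delta$ at $u_\delta=A_1$ where $\rho_m=\rho_0$ and get exactly $m_1$, then use concavity'' cannot work: the point $\rho_m=\rho_0$ is $u_\delta=A_2$, and there the upper estimate degenerates to $0$, not to $m_1$. The constant $m_1=\big(\tfrac32-\tfrac{\alpha}{1+\alpha}\big)\rho_0^{-\alpha}=\tfrac12\rho_0^{-\alpha}+\tfrac{1}{1+\alpha}\rho_0^{-\alpha}$ is the value attained in the limit $\rho_m\to\infty$, not in a Taylor expansion near $\rho_m=\rho_0$.

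The paper's route avoids any second-derivative or concavity claim. Writing $\mathcal{D}=A(\rho_0,\rho_m)(a-b)$ with $a=\tfrac12\tfrac{\rho_m-\rho_0}{\rho_m}(\rho_0^{-\alpha}-\rho_m^{-\alpha})$ and $b=\tfrac{1}{1+\alpha}(\rho_0^{-\alpha}-\rho_m^{-\alpha})-\tfrac{\alpha}{1+\alpha}\rho_m^{-\alpha}\tfrac{\rho_m-\rho_0}{\rho_m}$, one computes $\partial\mathcal{D}/\partial\rho_m$ and $\partial\rho_m/\partial u_\delta$ explicitly; nonnegativity of $\partial\mathcal{D}/\partial u_\delta$ reduces to the algebraic inequality $2a-(1+\alpha)b\ge 0$. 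For the upper bound, one drops positive terms from the denominator to get $\partial\mathcal{D}/\partial u_\delta\le \tfrac{\rho_m}{\rho_m-\rho_0}(a+b)$, and then observes that $\tfrac{\rho_m}{\rho_m-\rho_0}a=\tfrac12(\rho_0^{-\alpha}-\rho_m^{-\alpha})$ and $\tfrac{\rho_m}{\rho_m-\rho_0}b$ are both increasing in $\rho_m$ with limits $\tfrac12\rho_0^{-\alpha}$ and $\tfrac{1}{1+\alpha}\rho_0^{-\alpha}$ as $\rho_m\to\infty$, summing to $m_1$. If you correct the orientation and replace the concavity argument by this direct pointwise bound, your plan goes through.
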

\begin{proof}
Put
\[
\begin{split}
a &:=\frac{1}{2}\frac{\rho_{m}-\rho_{0}}{\rho_{m}}
(\rho_{0}^{-\alpha}-\rho_{m}^{-\alpha}) \\
b &:=\frac{1}{1+\alpha}(\rho_{0}^{-\alpha}
-\rho_{m}^{-\alpha})-\frac{\alpha}{1+\alpha}\rho_{m}^{-\alpha}
\frac{\rho_{m}-\rho_{0}}{\rho_m}.
\end{split}
\]
Then $\mathcal{D}= A(\rho_{0},\rho_{m})(a-b)$. The function $a$ increases with
$\rho_{m}$ and $a=0$ for $\rho_m=\rho_0$, i.e.\ $a>0.$ 
Also, $b$ increases with $\rho_{m}$, $b=0$ if $\rho_{m}=\rho_{1}$, so $b>0.$
Using The Chain Rule and the Implicit Function theorem we obtain
\[
\frac{\partial \mathcal{D}}{\partial u_\delta}
=\frac{\partial \mathcal{D}}{\partial \rho_m}
\frac{\partial \rho_{m}}{\partial u_\delta},
\]
where
\[
\begin{split}
\frac{\partial \mathcal{D}}{\partial \rho_{m}} 
&=-\frac{1}{2}A(\rho_{0},\rho_{m})\big(2a-(1+\alpha)b\big)
\frac{a+b}{(\rho_{m}-\rho_{0})(\rho_{0}^{-\alpha}-\rho_{m}^{-\alpha})}\\
\frac{\partial \rho_{m}}{\partial u_\delta} 
& =-\frac{\rho_{m}(\rho_{0}^{-\alpha}
-\rho_{m}^{-\alpha})}{(\rho_{0}^{-\alpha}
-\rho_{m}^{-\alpha})(\sqrt{\alpha}\rho_{m}^{-\frac{1+\alpha}{2}}
+\frac{\rho_{0}}{\rho_{m}}A(\rho_{0},\rho_{m}))
+\frac{1}{2}A(\rho_{0},\rho_{m})\big(2a-(1+\alpha)b\big)}.
\end{split}
\]
The inequality $\frac{\partial \mathcal{D}}{\partial u_\delta}\geq 0$ follows
from $\rho_{m}\geq \rho_{0}$ and the fact that
$\rho_{m}\big(2a-(1+\alpha)b\big)$ increases with $\rho_m$ increases, 
so $2a-(1+\alpha)b\geq 0.$
Next, we have
\[
\frac{\partial \mathcal{D}}{\partial u_\delta}
\leq \frac{\rho_{m}}{\rho_{m}-\rho_{0}}(a+b)\leq m_1.
\]
The above inequality holds because the values
$\frac{\rho_{m}}{\rho_{m}-\rho_{0}}a$ and $\frac{\rho_{m}}{\rho_{m}-\rho_{0}}b$
are nonnegative, increasing with $\rho_{m}$
and 
\[
\lim_{\rho_{m}\to \infty}\frac{\rho_{m}}{\rho_{m}-\rho_{0}}a
=\frac{1}{2}\rho_0^{-\alpha},\; 
\lim_{\rho_{m}\to \infty}\frac{\rho_{m}}{\rho_{m}-\rho_{0}}b
=\frac{1}{1+\alpha}\rho_0^{-\alpha}.
\]
\end{proof}

The following lemma can be proved in the same way.
\begin{lemma} 
The local energy production of the $R_1+S_2$ solution connecting
$\big(\frac{\xi_\delta}{\mu}, u_\delta\big)$ and $(\rho_1,u_1)$ is
\[
\mathcal{D}= A(\rho_{1},\rho_{m})\Big(\frac{1}{2}
\frac{\rho_{m}-\rho_{1}}{\rho_{m}}(\rho_{1}^{-\alpha}
-\rho_{m}^{-\alpha})+\frac{1}{1+\alpha}(\rho_{m}^{-\alpha}
-\rho_{1}^{-\alpha})+\frac{\alpha}{1+\alpha}\rho_{m}^{-\alpha}
\frac{\rho_{m}-\rho_{1}}{\rho_{m}}\Big),
\]
where $\rho_m$ satisfies 
\[
u_1+\frac{\rho_m-\rho_1}{\rho_m}A(\rho_{1},\rho_{m})
=u_\delta+\frac{2\sqrt{\alpha}}{1+\alpha}\rho_{m}^{-\frac{1+\alpha}{2}}, \; 
\rho_{m}>\rho_1.
\] 
Also, $\mathcal{D}$ decreases with respect to $u_{\delta}$ and
$-m_2:=-\big(\frac{3}{2}-\frac{\alpha}{1+\alpha}\big)\rho_{1}^{-\alpha}\leq
\frac{\partial \mathcal{D}}{\partial u_\delta}\leq 0$ for $u_\delta\in
[B_1,B_2]$.
\end{lemma}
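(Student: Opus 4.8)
The plan is to repeat the argument of the preceding lemma, exploiting the symmetry that interchanges $(\rho_0,u_0)$ and $(\rho_1,u_1)$ together with a reversal of orientation. First I would note that the local energy production of an $R_1+S_2$ combination comes entirely from the $S_2$ shock joining the intermediate state $(\rho_m,u_m)$ to $(\rho_1,u_1)$, since the $R_1$ rarefaction is continuous and therefore has zero local energy production. Writing $\mathcal{D}=-c_2[\eta]+[Q]$ across that shock for the energy pair (\ref{en_pair}), with $c_2$ and $u_m$ expressed through the Rankine--Hugoniot relations (so that $\rho_m$ solves the stated equation $u_1+\frac{\rho_m-\rho_1}{\rho_m}A(\rho_1,\rho_m)=u_\delta+\frac{2\sqrt{\alpha}}{1+\alpha}\rho_m^{-\frac{1+\alpha}{2}}$), a direct computation reduces $\mathcal{D}$ to the claimed closed form. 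Introducing
\[
a:=\tfrac12\frac{\rho_m-\rho_1}{\rho_m}\big(\rho_1^{-\alpha}-\rho_m^{-\alpha}\big),\qquad
b:=\frac{1}{1+\alpha}\big(\rho_1^{-\alpha}-\rho_m^{-\alpha}\big)-\frac{\alpha}{1+\alpha}\rho_m^{-\alpha}\frac{\rho_m-\rho_1}{\rho_m},
\]
one obtains $\mathcal{D}=A(\rho_1,\rho_m)(a-b)$, and both $a$ and $b$ are increasing in $\rho_m$ and vanish at $\rho_m=\rho_1$, hence are positive on the admissible range $\rho_m>\rho_1$.

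Next I would differentiate. Applying the chain rule and the implicit function theorem to the defining relation for $\rho_m$ gives $\frac{\partial\mathcal{D}}{\partial u_\delta}=\frac{\partial\mathcal{D}}{\partial\rho_m}\cdot\frac{\partial\rho_m}{\partial u_\delta}$, where
\[
\frac{\partial\mathcal{D}}{\partial\rho_m}=-\tfrac12 A(\rho_1,\rho_m)\big(2a-(1+\alpha)b\big)\frac{a+b}{(\rho_m-\rho_1)(\rho_1^{-\alpha}-\rho_m^{-\alpha})},
\]
and $\frac{\partial\rho_m}{\partial u_\delta}$ is an explicit ratio whose denominator is positive on the relevant range. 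The sign bookkeeping is the heart of the matter: one checks $2a-(1+\alpha)b\ge 0$ exactly as before, since the product $\rho_m\big(2a-(1+\alpha)b\big)$ is increasing in $\rho_m$ and nonnegative; the single change relative to the $S_1+R_2$ case is that now $\rho_m$ \emph{increases} with $u_\delta$ (whereas in the $S_1+R_2$ situation it decreased), so that the two negative factors become one negative and one positive, yielding $\frac{\partial\mathcal{D}}{\partial u_\delta}\le 0$. Hence $\mathcal{D}$ decreases in $u_\delta$.

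For the lower bound I would estimate $\big|\tfrac{\partial\mathcal{D}}{\partial u_\delta}\big|\le \frac{\rho_m}{\rho_m-\rho_1}(a+b)$, observe that $\frac{\rho_m}{\rho_m-\rho_1}a$ and $\frac{\rho_m}{\rho_m-\rho_1}b$ are nonnegative and increasing in $\rho_m$ with $\lim_{\rho_m\to\infty}\frac{\rho_m}{\rho_m-\rho_1}a=\tfrac12\rho_1^{-\alpha}$ and $\lim_{\rho_m\to\infty}\frac{\rho_m}{\rho_m-\rho_1}b=\tfrac{1}{1+\alpha}\rho_1^{-\alpha}$, and add the two limits to get the bound $m_2=\big(\tfrac32-\tfrac{\alpha}{1+\alpha}\big)\rho_1^{-\alpha}$, valid on the interval $u_\delta\in[B_1,B_2]$ on which the $R_1+S_2$ combination is the local solution. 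The main obstacle is purely the sign and monotonicity bookkeeping in the derivative formula — verifying $2a-(1+\alpha)b\ge 0$, the positivity of the denominator of $\partial\rho_m/\partial u_\delta$, and the correct direction of the sign flip relative to the previous lemma; with those in hand, the rest is a mechanical transcription of the preceding proof under the substitution $(\rho_0,u_0)\leftrightarrow(\rho_1,u_1)$.
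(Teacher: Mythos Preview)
Your proposal is correct and follows exactly the approach the paper intends: the paper does not give a separate proof but simply states that the lemma ``can be proved in the same way'' as the preceding $S_1+R_2$ lemma, i.e.\ by the substitution $(\rho_0,u_0)\leftrightarrow(\rho_1,u_1)$ and the associated sign flip. Your identification that the only essential change is the sign of $\partial\rho_m/\partial u_\delta$ (now positive, whereas it was negative before) is precisely the point, and the bound on $|\partial\mathcal{D}/\partial u_\delta|$ via $\frac{\rho_m}{\rho_m-\rho_1}(a+b)\to m_2$ as $\rho_m\to\infty$ is the direct analogue of the paper's argument.
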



All the above lemas have to be used in the proof of tollowing theorem.
\begin{theorem}
There exists unique $u_\delta \in [\min\{A_2,B_1\}, \max\{A_2,B_1\}]$ and the
corresponding solution to the problem (\ref{genchap}, \ref{pgd_id}) satisfying
backward energy condition.
\end{theorem}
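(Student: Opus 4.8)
The backward energy condition (Definition~\ref{bec}), after the boundary flux is normalised away via Lemma~\ref{lemma:affine}, selects the value of $u_\delta$ for which the local energy production at the initial instant, $\mathcal D(u_\delta):=\lim_{\mu\to 0}\tfrac{d}{dt}H_{[-L,L]}(U(\cdot,0+))$, is largest (equivalently, the dissipation smallest), with ties broken by the initial energy, which for the pair (\ref{en_pair}) equals $\tfrac12\xi_\delta u_\delta^2$ up to terms independent of $u_\delta$ (and $o(1)$ as $\mu\to0$), i.e.\ by minimising $u_\delta^2$. At $t=0+$ the approximate solution is still the pair of waves of the double Riemann problem at $x=\mp\mu/2$, hence $\mathcal D(u_\delta)=\mathcal D_\ell(u_\delta)+\mathcal D_r(u_\delta)$, where $\mathcal D_\ell$ is the production of the wave joining $(\rho_0,u_0)$ to $(\xi_\delta/\mu,u_\delta)$ (its form governed by $A_1,A_2$) and $\mathcal D_r$ that of the wave joining $(\xi_\delta/\mu,u_\delta)$ to $(\rho_1,u_1)$ (governed by $B_1,B_2$). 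The first step is to record, from the three lemmas above and the fact that rarefaction fans dissipate nothing, the profile of each summand: $\mathcal D_\ell\le0$ on $\mathbb R$, $\mathcal D_\ell\equiv0$ exactly on $[A_2,\infty)$, and $\mathcal D_\ell$ is continuous and strictly increasing on $(-\infty,A_2]$ (on $(-\infty,A_1]$ it is the cubic $f_1$, with $u_\delta$--derivative $\ge m_1>0$ and negative second derivative there; on $[A_1,A_2]$ it is the $S_1{+}R_2$ production, strictly increasing and vanishing at $A_2$). Symmetrically $\mathcal D_r\le0$, $\mathcal D_r\equiv0$ exactly on $(-\infty,B_1]$, continuous and strictly decreasing on $[B_1,\infty)$. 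Consequently $\mathcal D\le0$, with $\mathcal D=0$ exactly on $[A_2,B_1]$, which is nonempty iff $A_2\le B_1$.

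\textbf{Case $A_2\le B_1$.} Then $\max\mathcal D=0$, attained exactly for $u_\delta\in[A_2,B_1]$, on which the local solution is an admissible combination of elementary waves with the mass $\xi_\delta$ concentrated along $x=u_\delta t$; so every such $u_\delta$ meets the first part of the condition. The tie--break then picks the unique minimiser of $u_\delta^2$ over $[A_2,B_1]$, namely $0$ if $0\in[A_2,B_1]$ and otherwise the endpoint of $[A_2,B_1]$ nearest $0$: a single point of $[\min\{A_2,B_1\},\max\{A_2,B_1\}]=[A_2,B_1]$.

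\textbf{Case $A_2>B_1$.} Here $\mathcal D<0$ everywhere. Since $\mathcal D=\mathcal D_\ell$ is strictly increasing on $(-\infty,B_1]$ and $\mathcal D=\mathcal D_r$ strictly decreasing on $[A_2,\infty)$, the global maximum of $\mathcal D$ is attained inside $[B_1,A_2]$. For uniqueness I would show $\mathcal D$ is concave on $[B_1,A_2]$: on each subinterval cut out by the transition points $A_1,B_2$ the function $\mathcal D$ is $C^1$ and equals a sum of two functions among $\{f_1,\ \mathcal D_{S_1+R_2},\ f_2,\ \mathcal D_{R_1+S_2}\}$, and the cubics $f_1,f_2$ are strictly concave on the relevant ranges ($u_\delta<u_0$, resp.\ $u_\delta>u_1$), so it suffices that $\mathcal D_{S_1+R_2}$ and $\mathcal D_{R_1+S_2}$ be concave in $u_\delta$. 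Granting that, the maximiser set of the concave $C^1$ map $\mathcal D$ on $[B_1,A_2]$ is the closed interval $\{\mathcal D'=0\}$, which lies in $(B_1,A_2)$ because $\mathcal D'(B_1)=\mathcal D_\ell'(B_1)>0$ and $\mathcal D'(A_2)=\mathcal D_r'(A_2)<0$ (strict since $B_1\neq A_2$); a further tie--break by $\min u_\delta^2$ (vacuous when this set is a point) then produces a unique $u_\delta\in(B_1,A_2)\subset[\min\{A_2,B_1\},\max\{A_2,B_1\}]$.

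\textbf{Main obstacle.} The only ingredient beyond the earlier lemmas, which supply just $0\le\partial_{u_\delta}\mathcal D_{S_1+R_2}\le m_1$ and $-m_2\le\partial_{u_\delta}\mathcal D_{R_1+S_2}\le0$, is the concavity in $u_\delta$ of $\mathcal D_{S_1+R_2}$ and $\mathcal D_{R_1+S_2}$. I expect it from differentiating those lemmas' explicit formulas once more — equivalently, from showing $\partial_{u_\delta}\mathcal D_{S_1+R_2}$ is non--increasing in $u_\delta$ (hence non--decreasing in $\rho_m$, as $\partial_{u_\delta}\rho_m<0$) using the monotone auxiliary quantities $\tfrac{\rho_m}{\rho_m-\rho_0}a,\ \tfrac{\rho_m}{\rho_m-\rho_0}b$ from that proof and the endpoint values $\partial_{u_\delta}\mathcal D_{S_1+R_2}\to m_1$ as $\rho_m\to\infty$ ($u_\delta\to A_1$) and $\to0$ as $\rho_m\to\rho_0$ ($u_\delta\to A_2$); this is routine but lengthy. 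The same computation also yields the $C^1$--matching of the pieces of $\mathcal D_\ell$ at $A_1$ and of $\mathcal D_r$ at $B_2$ (namely $f_1'(A_1)=m_1=\lim_{\rho_m\to\infty}\partial_{u_\delta}\mathcal D_{S_1+R_2}$ and its mirror), needed for $\mathcal D_\ell,\mathcal D_r$ to be genuinely $C^1$ and hence for the concavity argument to apply across transitions.
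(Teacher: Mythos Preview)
Your overall architecture is the same as the paper's: decompose the initial--time production as $\mathcal D(u_\delta)=\mathcal D_\ell(u_\delta)+\mathcal D_r(u_\delta)$, record from the three lemmas that $\mathcal D_\ell$ increases to $0$ at $A_2$ and $\mathcal D_r$ decreases from $0$ at $B_1$, split into $A_2\le B_1$ (tie--break on the plateau $[A_2,B_1]$) and $A_2>B_1$ (localise the maximum in $[B_1,A_2]$). Where you diverge is in the uniqueness argument for $A_2>B_1$: the paper does \emph{not} argue via concavity. Instead it tabulates the six admissible orderings of $A_1,A_2,B_1,B_2$ and, in each, uses the derivative bounds $\mathcal D_\ell'\ge m_1$ on $(-\infty,A_1]$, $\mathcal D_\ell'\in[0,m_1]$ on $[A_1,A_2]$, $\mathcal D_r'\le -m_2$ on $[B_2,\infty)$, $\mathcal D_r'\in[-m_2,0]$ on $[B_1,B_2]$, together with the dichotomy $m_1\gtrless m_2\Leftrightarrow \rho_0\lessgtr\rho_1$, to pin down a single subinterval on which $\mathcal D$ goes from increasing to decreasing; the appendix diagram records which subinterval this is in each case. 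Your concavity route, if it goes through, would replace that six--fold case split by a single argument, which is cleaner.

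The genuine gap you flag is real and is the crux. The lemmas give only $0\le\partial_{u_\delta}\mathcal D_{S_1+R_2}\le m_1$; they do \emph{not} assert monotonicity of this derivative, and the bound $\partial_{u_\delta}\mathcal D_{S_1+R_2}\le\frac{\rho_m}{\rho_m-\rho_0}(a+b)$ in that proof is an upper envelope that itself increases to $m_1$ as $\rho_m\to\infty$, so it does not by itself force concavity. Your endpoint matching $f_1'(A_1)=m_1$ is correct, but the matching on the shock side, $\lim_{u_\delta\to A_1^+}\partial_{u_\delta}\mathcal D_{S_1+R_2}=m_1$, is not immediate from the lemma (only $\le m_1$ is). So both the $C^1$--gluing at $A_1$ (resp.\ $B_2$) and the concavity of the $S_1{+}R_2$ (resp.\ $R_1{+}S_2$) piece require a further computation you have not supplied; calling it ``routine'' is optimistic given the implicit dependence of $\rho_m$ on $u_\delta$. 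If that computation succeeds, your argument is complete and more conceptual than the paper's; if it does not, you would have to fall back on the paper's case analysis (where the sign comparison $m_1\gtrless m_2$ does the work that concavity would otherwise do).
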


The proof is based on an analysis of all possible relations between
the values $A_{1}$, $A_{2}$, $B_{1}$ and $B_{2}$ where 
one uses the above lemmas and direct calculations. There are a lot of technical
details, so we will present the table with results in the Appendix.

\begin{remark}
To obtain the form of approximate solution for each $t>0$ and a
corresponding distributional limit, it is necessary to know the result of all
interaction problems including at least one shadow wave. That also includes
interactions bet\-ween shadow and rarefaction waves. That is left for future
work.
\end{remark}

\section*{Appendix}

\hspace*{-.50cm}
\includegraphics[scale=0.27]{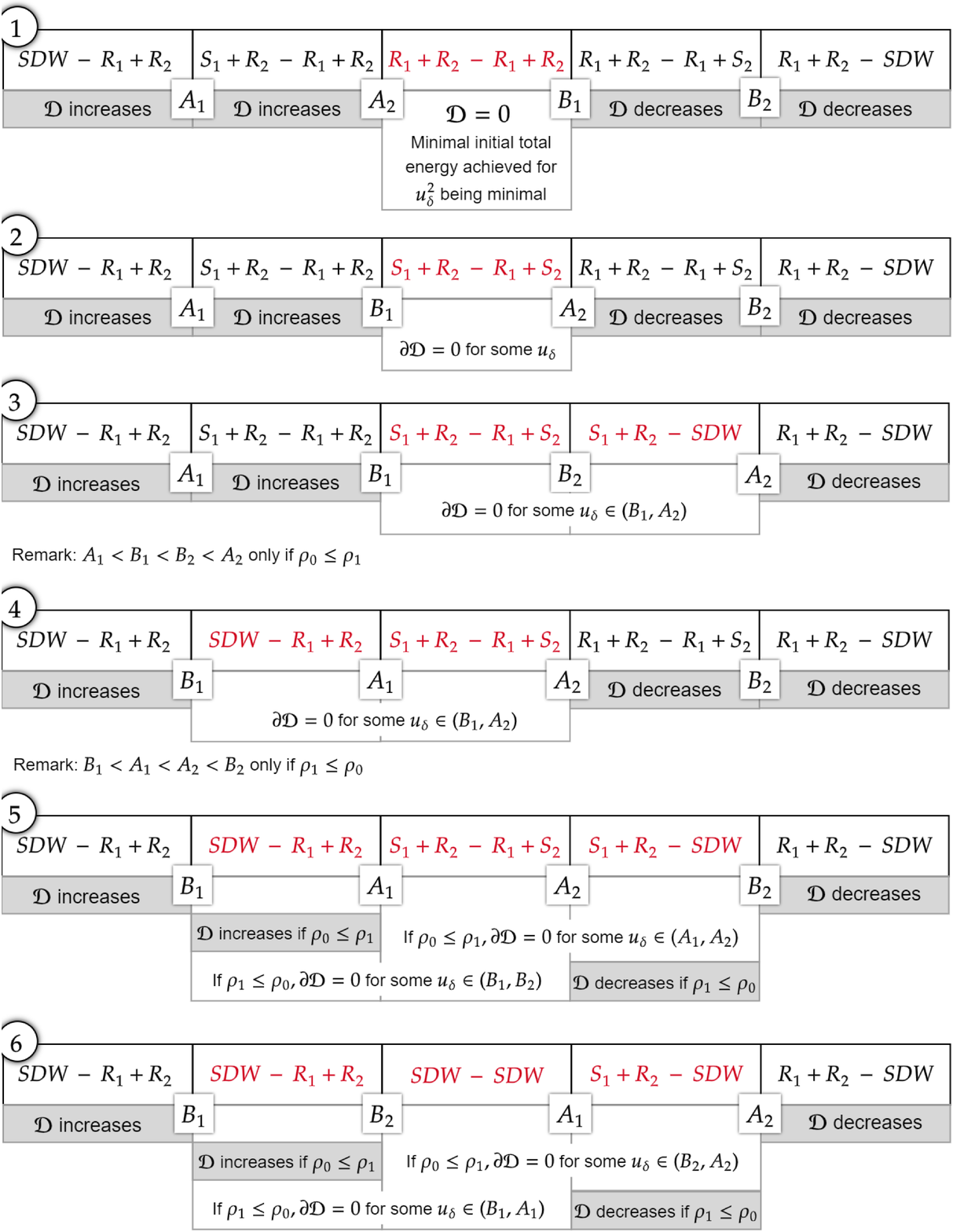}


\begin{thebibliography}{99}

\bibitem{barbera} E. Barbera, On the principle of minimal entropy production
for Navier-Stokes-Fourier fluids, Continuum Mech. Thermodyn. (1999) 11:327--330 




\bibitem{b_g} Y.\ Brenier, E.\ Grenier, Sticky particles and scalar
conservation laws, SIAM J.\ Numer.\ Anal. 35(6) (1998), 2317--2328.

\bibitem{Chaplygin} S.\ Chaplygin, {On gas jets}, Sci.\ Mem.\ Moscow Univ.\
Math.\ Phys. 21 (1904), 1--121.

\bibitem{CK2014} E. Chiodaroli, O. Kreml, {On the Energy Dissipation Rate of
Solutions to the Compressible Isentropic Euler System}, Arch.\ Ration.\
Mech.\ Anal. 214 (2014), 1019--1049.

\bibitem{CD_1973} C.\ Dafermos, The entropy rate admissibility criterion for
solutions of hyperbolic conservation laws,
J.\ Differ.\ Equ.\ {\bf 14} (1973), 202--212.

\bibitem{CD_2012} C.\ Dafermos, Maximal dissipation in equations of evolution,
J.\ Differ.\ Equ.\ {\bf 252}(1) (2012), 567--587.


\bibitem{F2014} E.\ Feireisl, Maximal Dissipation and Well-posedness for
the Compressible Euler System, J.\ Math.\ Fluid Mech.\ 16 (2014), 447--461.

\bibitem{FGSW} E.\ Feireisl, P.\ Gwiazda, A.\ \' Swierczewska-Gwiazda, 
E.\ Wiedemann, Regularity and energy conservation for the compressible 
Euler equations, Arch.\ Ration.\ Mech.\ Anal.\ 223 (2017), 1375--1395.

\bibitem{Hsiao} L.\ Hsiao, The entropy rate admissibility criterion in gas
dynamics, J. Differential Equations 38 (1980), 226--238.




\bibitem{WRS} W.E, Y.G.\ Rykov, Ya.G.\ Sinai, Generalized variational
principles, global weak solutions and behavior with random initial data for
systems of conservation laws arising in adhesion dynamics, Comm.\ Math.\ Phys.
177 (2) (1996), 349--380.

\bibitem{ph_dark} A. Kamenshchik, U. Moschella, V. Pasquier, {An alternative to
quintessence}, Phys.\ Lett. {\bf 511} (2001), 265--268.



\bibitem{MW} I. M\"uller, W. Weiss, Thermodynamics of irreversible
processes - past and present, Eur. Phys. J. H 37 (2012), 139--236.


\bibitem{mn2010} M.\ Nedeljkov, Shadow waves,
entropies and interactions for delta and singular shocks,
Arch.\ Ration.\ Mech.\ Anal. {\bf 197},2 (2010), 489--537.

\bibitem{mn2014} M.\ Nedeljkov,
Higher order shadow waves and delta shock blow up in the Chaplygin gas,
J.\ Differential Equations 256 (2014), no. 11, 3859--3887.

\bibitem{MNtam} M. Nedeljkov, {Admissibility of a solution to generalized
Chaplygin gas}, Theoretical and Applied Mechanics 46 (1), (2019), 89--96.

\bibitem{mn_o2008} M.\ Nedeljkov, M.\ Oberguggenberger, Interactions of delta
shock waves in a strictly hyperbolic system of conservation laws, J. Math.\
Anal.\ Appl. 344 (2008), 1143--1157.

\bibitem{MN_SR2017} M. Nedeljkov, S. Ru\v zi\v ci\' c, {On the uniqueness of
solution to generalized Chaplygin gas}, Discrete and continuous dynamical
systems, 37,8 (2017), 4439--4460.

\bibitem{mn_sr2019} M.\ Nedeljkov, S.\ Ru\v zi\v ci\' c, Shadow wave
tracking procedure and initial data problem for pressureless gas model,
https://arxiv.org/abs/1906.09093







\bibitem{Zeldovich} Ya.B.\ Zeldovich, {Gravitational instability: An
approximate theory for large density perturbations}, Astron.\ Astrophys.,
5 (1970), pp. 84--89.





\end{thebibliography}
\end{document}